\documentclass[twoside]{article}
\usepackage{amsmath,amsthm, amsfonts, amssymb}
 \usepackage{pgf,tikz}
\usepackage{pgfplots}
\usepackage{amsmath,amsthm, amsfonts, amssymb}
\usepackage{enumerate}
\usepackage{geometry}
\geometry{lmargin=3cm,rmargin=3cm}
\usepackage{secdot}
\usepackage{sectsty}
\allsectionsfont{\normalfont}

\theoremstyle{plain}
\newtheorem{thm}{Theorem}[section]
\newtheorem{prop}[thm]{Proposition}
\newtheorem{cor}[thm]{Corollary}
\newtheorem{lem}[thm]{Lemma}
\theoremstyle{definition}
\newtheorem{defn}[thm]{Definition}
\newtheorem{exa}[thm]{Example}

\newtheorem*{question*}{Question}

\newtheorem{rem}[thm]{Remark}
\usepackage[linktocpage, bookmarks]{hyperref}

\usepackage{fancyhdr}
\usepackage{lipsum}

\fancyhf{}
\fancyhead[LE,RO]{\thepage}
\fancyhead[CE]{\Author}
\fancyhead[CO]{\Title}

\pagestyle{fancy}

\author{First Author \and Second Author}
\title{CELLULAR AUTOMATA OVER RACKS}

\makeatletter
\newcommand\Author{NAQEEB UR REHMAN, AND MUHAMMAD KHURAM SHAHZAD}
\let\Title\@title
\makeatother

\begin{document}
	
	\title{\textsc{Cellular Automata on Racks}}
	
	\author{\textsc{Naqeeb ur Rehman, and Muhammad Khuram Shahzad}}
	\date{}
	\maketitle
	
\paragraph{ABSTRACT.}In this paper we initiate the study of cellular automata on racks. A rack $R$ is a set with a self-distributive binary operation. The rack $R$ acts on the set $A^R$ of configurations from $R$ to a set $A$. We define the cellular automaton on a rack $R$ as a continuous self-mapping of $A^R$ defined from a system of local rules. The cellular automata on racks do not commute with the rack action. However, under certain conditions, the cellular automata on racks do commute with the rack action. We study the equivariant cellular automta (which commute with the rack action) on racks and prove several properties of these cellular automata including the analog of Curtis-Hedlund’s theorem for cellular automata on groups.

\begin{center}
	\section{\textsc{Introduction}}
\end{center}
\paragraph{} Cellular automata were introduced by John von Neumann
 as discrete dynamical systems in order to describe theoretical models of self-reproducing machines (see \cite{13}). A cellular automaton is a map defined on a set of configurations. A configuration is a map from a set called the universe into another set called the alphabet. The elements of the universe are called cells and the elements of the alphabet are called states.  Originally, the cells were the squares of an infinite $2-$dimensional checker board. Later it had been extended to a higher dimensional board. In modern cellular automaton theory, the cell structure is provided by any group $G$. The left multiplication in $G$ induces a natural action of $G$ on the set of configurations which is called the $G-$shift and all cellular automata commute with the shift i.e. it is $G-$equivariant. We refer the reader to \cite{3} for more details.
  \paragraph{} In this paper we initiate the study of cellular automata on a self-distributive algebraic structure called rack. The binary operation of a rack $R$ is like the conjugation in a group $G$. Racks and subclasses of racks were studied in geometry and knot theory under various names such as kei in \cite{14}, quandle in \cite{8}, distributive groupoid in \cite{11}, and automorphic set in \cite{2}. We refer the reader to \cite{4} for more details. Section 2 recalls the definitions and examples of racks. Section 3 first defines the configuration set on the rack $R$ with a shift action of $R$. It also defines what is a cellular automaton over the configuration set on a rack $R$ and proves its basic properties. The cellular automata on racks are in general non-equivariant under the shift action of a rack. Section 3 also defines the equivariant cellular automata on racks and their memory sets. This class of cellular automata is the one which have the most similarity with the ones on groups. Section 4 investigates the continuous cellular automata on racks with respect to the prodiscrete topology on  the configuration set on a rack. In particular, we prove a characterization of equivariant cellular automata on racks which is an analogue of Curtis-Hedlund's theorem  for cellular automata on groups. Section 5 studies the composition of cellular automata on racks. In particular, it proves that the set of cellular automata on a rack forms a closed self-distributive algebraic structure under the composition of cellular automata.

 \newpage
\begin{center}
\section{\textsc{Preliminaries on Racks}}

\end{center}

\paragraph{} We begin with the definition and examples of rack. We refer the reader to \cite{1}, \cite{4} for more details.

\begin{defn}
	A \emph{rack} is a pair $(R, \rhd)$, where $R$ is a non-empty set and
	$\rhd:R\times R \longrightarrow R$ is a binary operation such that
	\begin{description}
		\item[\;\;\; (R1)] the map $\phi_r : R \longrightarrow R$, defined by $\phi_r(s)=r\rhd s$, is bijective for all $r, s\in R$,
		\item[\;\;\; (R2)] $r \rhd (s \rhd t) = (r \rhd s) \rhd (r \rhd t)$ for all $r, s, t \in R$ (i.e. $\rhd$ is self-distributive).
	\end{description}
\end{defn}
\paragraph{} Since the map $\phi_r : R \longrightarrow R$, defined by $\phi_r(s)=r\rhd s$, is bijective for all $r\in R$, therefore the inverse of $\phi_r$ defines a second binary operation $\rhd^{-1}$ on $R$ such that the map $\phi^{-1}_r : R \longrightarrow R$, defined by $\phi^{-1}_r(s)=r\rhd^{-1} s$, is also bijective, and $\rhd^{-1}$ is left-self distributive. By the self-distributivity of $\rhd$ it follows that the map $\phi_r$ is a rack automorphism which is called the rack inner automorphism. The set of inner automorphisms $\phi_r$ for all $r\in R$ generates a group which is called the \textit{inner group} of a rack $R$. By the self-distributivity of $\rhd$ it follows that, for all $r_{1},r_{2}\in R$,

\begin{center}
$\phi_{r_{1}\rhd r_{2}}= \phi_{r_{1}}\circ\phi_{r_{2}}\circ\phi^{-1}_{r_{1}}$, 
\end{center}
where $\circ$ denotes the composition of functions.
\paragraph{} A \textit{quandle} is a rack $(R, \rhd)$ where every element is idempotent, that is, for every $r\in R$, $r\rhd r=r$. A \textit{subrack} of a rack $(R, \rhd)$ is  a subset $S\subset R$ such $(S, \rhd)$ is also a rack. A \textit{shelf} is a pair $(S, \rhd)$, where $S$ is a non-empty set and $\rhd:S\times S \longrightarrow S$ is a self-distributive binary operation. Note that every rack is a shelf.

\begin{exa}
	Any non-empty set $R$ with operations  $\rhd$ and $\rhd^{-1}$ defined by $r\rhd s = s$, $r\rhd^{-1} s = s$, for all $r,s\in R$, is a quandle called the \emph{trivial quandle}.
\end{exa}
\begin{exa}Let $\mathbb{Z}_n =\{ 0, 1, 2, 3, ... n-1\}$ be the group of residue classes of $\mathbb{Z} \pmod n$. Then $\mathbb{Z}_n$ a is rack with operations $\rhd$ and $\rhd^{-1}$ defined  by
	\begin{center}
		$r\rhd s = s+1$, $r\rhd^{-1} s = s-1$,
	\end{center}
	for any $r,s\in \mathbb{Z}_n$. This is called  the  \emph{cyclic rack}. Note that the cyclic rack is not a quandle because $r\rhd r = r+1 \neq r$.

\end{exa}

\begin{exa}
	Let $G$  be a group. Consider two binary operations $\rhd$ and $\rhd^{-1}$ on $G$ defined by:
	\begin{center}
		$r \rhd s := rsr^{-1} , r \rhd^{-1} s := r^{-1}sr$.
	\end{center}
Then the group $G$ endowed with these two operations is a quandle which is  called the \emph{conjugation quandle} on $G$, denoted as $Conj(G)$.
	
\end{exa}

\begin{exa}
	Let $G$  be a group. Consider two binary operations $\rhd$ and $\rhd^{-1}$ on $G$ defined by:
	\begin{center}
		$r \rhd s := rs^{-1}r , r \rhd^{-1} s := rs^{-1}r$.
	\end{center}
Then the group $G$ endowed with these two operations is a quandle which is  called the \emph{core quandle} on $G$, denoted as $Core(G)$.
	
\end{exa}

\begin{exa}
	The \emph{dihedral quandle} $\mathbb{D}_n$ of order $n$ is defined on $\mathbb{Z}_n=\{0,1,...,n-1\}$ by
	\begin{center}
		$r \rhd s = r \rhd^{-1} s = 2r-s \pmod n$,
	\end{center}
	for all $r, s \in \mathbb{Z}_n$.
\end{exa}
\begin{exa}
		Let $A$ be an abelian group and $Aut(A)$ is the group of automorphisms of $A$. Let $g \in Aut(A)$ and $1=id_A$. Then we have a quandle structure on $A$ given by
		\begin{center}
			$r \rhd s = (1-g)(r) + g(s)$,
			\\$r \rhd^{-1} s = (1-g^{-1})(r) + g^{-1}(s)$,
		\end{center}
		for all $r,s \in A$. This quandle is called the \emph{affine quandle or Alexander quandl} associated to the pair ($A, g)$ and is denoted by \emph{Aff}$(A, g)$.
		\paragraph{} Let $\mathbb{F}_q$ be a finite field, where $q$ is a power of a prime number $p$, and $\alpha \in \mathbb{F}_q \setminus \{0\}$. Then we write \emph{Aff}$(\mathbb{F}_q, \alpha)$ or simply \emph{Aff}$(q, \alpha)$ for the affine quandle \emph{Aff}$(A, g)$, where $A = \mathbb{F}_q$ and $g:\mathbb{F}_q\rightarrow \mathbb{F}_q$ is the automorphism given by $g(s) = \alpha s$ for all $ \in\mathbb{F}_q $. It is easy to see that for $\alpha = 1$,  the affine quandle \emph{Aff}$(q, \alpha)$ is trivial, and  for $\alpha = -1$, the affine quandle \emph{Aff}$(p, -1)$ is the dihedral quandle $\mathbb{D}_p$.
\end{exa}

\begin{rem}
Observe that if $(R, \rhd)$ is a finite rack then any closed subset $S$ of $R$ is a subrack, because if $S$ is closed under $\rhd$, then the restriction $\phi_r|_S : R \rightarrow R$ is bijective for every $r\in S$, and the self-distributivity of $\rhd$ in $S$ is inherited from $R$. However, if $R$ is an infinite rack then a closed subset $S$ of $R$ is not always a subrack (see \cite{9}). We give an example of a closed subset of an infinite rack which is not a subrack.
	
\end{rem}

	\begin{exa} Let $\mathbb{Q}=\{\frac{p}{q} : p,q\in\mathbb{Z},  \text{and}, q \neq 0\}$. Then $\mathbb{Q}$  is a rack under the binary operations $\rhd$ and $\rhd^{-1}$ defined by
		\begin{center}
			$r\rhd s =  \frac{(k-1)r +s}{k}$, $r\rhd^{-1} s =  (1-k)r +ks$,
		\end{center}
for all $r, s\in \mathbb{Q}$, where $k$ is an integer with $k\geq1$. For $k = 2$,  $r\rhd s = \frac{r +s}{2}$.
Now take the set $P$ of positive rationals. Since the average of two positive numbers is positive, therefore $P$ is closed subset of $\mathbb{Q}$ under $\rhd$. However, $P$ is not a subrack of $\mathbb{Q}$  under the operation $\rhd$ because there is no positive rational $s\in \mathbb{Q}$ that satisfies the equation $3\rhd s =\frac{1}{2}$.
		
	\end{exa}

	\begin{defn}
		Let $(R, \rhd)$ be a rack and $X$ be a set. A \textit{rack action} of $R$ on $X$ is a map $\cdot:R\times X\rightarrow X$ such that
		
		\begin{description}
			\item[\;\;\; (1)] the map $\varphi_r : X \longrightarrow X$, defined by $\varphi_r(x)=r\cdot x$, is bijective for all $r\in R$ and $x\in X$,
			\item[\;\;\; (2)] $	r_{1}\cdot(r_{2}\cdot x)=(r_{1}\rhd r_{2})\cdot(r_{1}\cdot x)$, for all $r_{1},r_{2}\in R$ and $x\in X$.
		\end{description}
\end{defn}
A \emph{rack set} or \emph{R-set} is a set $X$ with a rack action by a rack $R$. Note that any rack $(R, \rhd)$ acts on itself via its binary operation $\rhd$.

	\begin{exa}
		Let $R$ be a rack and let $X = \{x_1, x_2, ..., x_n\}$ be a set of cardinality $n$. Then for any
		permutation $\sigma \in S_n$ the rack action is given by $r \cdot x_i = x_{\sigma(i)}$. Indeed, for all $r_1, r_2\in R$,
		\begin{center}
		$r_1 \cdot (r_2 \cdot x) = r_1 \cdot (x_{\sigma(i)})=x_{\sigma^2(i)}= (r_1\rhd r_2) \cdot ( x_{\sigma(i)}) = (r_1 \rhd r_2) \cdot (r_1  \cdot x)$.
		\end{center}
		
	\end{exa}
	
	\begin{exa}
Let $G$ acts on a set $X$. Then this group action defines a rack action of the conjugation rack $Conj(G)$. Indeed, for all $r_1, r_2\in G$,
\begin{center}
$r_1 \cdot (r_2 \cdot x) = (r_1r_2) \cdot x)=  (r_1r_2r_1^{-1}r_1) \cdot x) =(r_1r_2r_1^{-1})(r_1(x)) = (r_1 \rhd r_2)\cdot (r_1 \cdot x)$.
\end{center}
	\end{exa}
	
	\begin{defn}
		Let $R$ be a rack and $X, Y$ be two $R-$sets. Then a map $f: X\longrightarrow Y$ is said to be \emph{$R-$equivariant} if $f(rx)=rf(x)$ for all $x\in X$ and $r\in R$.
		
	\end{defn}
	
	\begin{defn}
		Let $R$ be a rack, $X$ be an $R$-set and $x\in X$. Then the \textit{stabilizer} of $x$ is defined as
		\begin{equation*}
		Stab(x):=\{r\in R:r \cdot x=x\}\subset R.
		\end{equation*}
	\end{defn}
	
	\begin{lem}
		Let $R$ be a rack, $X$ be a rack set. Then $Stab(x)$ is a shelf in $R$ for all $x\in X$.
	\end{lem}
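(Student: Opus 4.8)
The plan is to show that $\mathrm{Stab}(x)$ is closed under the rack operation $\rhd$; once this is established, self-distributivity of $\rhd$ restricted to $\mathrm{Stab}(x)$ is inherited from $R$, which is exactly what is needed to conclude that $(\mathrm{Stab}(x),\rhd)$ is a shelf. So the heart of the matter is the closure step: given $r_1, r_2 \in \mathrm{Stab}(x)$, i.e. $r_1 \cdot x = x$ and $r_2 \cdot x = x$, I must verify $(r_1 \rhd r_2)\cdot x = x$.

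First I would invoke axiom (2) of a rack action, namely $r_1\cdot(r_2\cdot x) = (r_1\rhd r_2)\cdot(r_1\cdot x)$, and substitute the stabilizer hypotheses into both sides. On the left, $r_2\cdot x = x$ gives $r_1\cdot(r_2\cdot x) = r_1\cdot x = x$. On the right, $r_1\cdot x = x$ gives $(r_1\rhd r_2)\cdot(r_1\cdot x) = (r_1\rhd r_2)\cdot x$. Equating the two yields $(r_1\rhd r_2)\cdot x = x$, so $r_1\rhd r_2 \in \mathrm{Stab}(x)$. This shows $\mathrm{Stab}(x)$ is closed under $\rhd$.

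Next I would note that $\rhd$ remains self-distributive on any subset of $R$ on which it restricts to a well-defined operation, in particular on $\mathrm{Stab}(x)$, since the identity $r\rhd(s\rhd t) = (r\rhd s)\rhd(r\rhd t)$ holds for \emph{all} elements of $R$ and therefore for all elements of $\mathrm{Stab}(x)$. Hence $(\mathrm{Stab}(x),\rhd)$ is a non-empty set with a self-distributive binary operation, i.e.\ a shelf, completing the proof. (Non-emptiness can be handled separately: if $R$ is a quandle one has $x\cdot x = x$ giving a canonical element, but in general one should either assume $\mathrm{Stab}(x)\neq\emptyset$ or observe that the statement is about the shelf axioms, which only require a \emph{set} with a self-distributive operation and are vacuously compatible with emptiness depending on the convention adopted in the paper.)

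The only genuine subtlety — and the step I would be most careful about — is that $\mathrm{Stab}(x)$ need not be a \emph{subrack}: there is no reason for the restriction $\phi_r|_{\mathrm{Stab}(x)}$ to be a bijection of $\mathrm{Stab}(x)$ onto itself, which is precisely why the conclusion is phrased in terms of shelves rather than racks. This mirrors the earlier remark in the paper that closed subsets of infinite racks can fail to be subracks, and it is the reason the lemma cannot be strengthened without extra hypotheses (e.g.\ finiteness of $R$, via the Remark above).
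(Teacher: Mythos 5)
Your proposal is correct and follows essentially the same route as the paper: both use the rack-action axiom $r_1\cdot(r_2\cdot x)=(r_1\rhd r_2)\cdot(r_1\cdot x)$ together with $r_1\cdot x=r_2\cdot x=x$ to get closure of $\mathrm{Stab}(x)$ under $\rhd$, with self-distributivity inherited from $R$. Your added remarks on non-emptiness and on why one only gets a shelf rather than a subrack are sensible but do not change the argument.
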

	\begin{proof}
		Let $r_{1},r_{2}\in Stab(x)$. Then $r_{1} \cdot x=x$ and $r_{2} \cdot x=x$. Therefore, $(r_{1}\rhd r_{2}) \cdot x  =(r_{1}\rhd r_{2})\cdot (r_{1}\cdot x)= r_{1}\cdot(r_{2}\cdot x)
	 = r_{1}\cdot x= x$. Hence $ r_{1}\rhd r_{2}\in $ $Stab(x)$, that is, $Stab(x)$ is a shelf in $R$.

	\end{proof}
	
\paragraph{\textbf{The Configuration Space on a Rack with a Shift Action.}}

	Let $R$ be a rack and $A$ be a set. Consider the set $A^R$ consisting of all maps from $R$ to $A$:
	\begin{center}
		$A^R=\prod\limits_{r\in R}A:=\{x:R\longrightarrow A\}$.
	\end{center}
	The set $A$ is called the \emph{alphabet}. The elements of $A$ are called the \emph{letters}, or the \emph{states}, or the \emph{symbols}, or the \emph{colors}. The rack $R$ is called the \emph{universe}. The set $A^R$ is called the \emph{set of configurations} or simply the \emph{configuration space} on a rack $R$.
	
	 Given an element $r \in R$ and a configuration $x \in A^R$, we define the configuration $r \cdot x \in A^R$ by
	\begin{center}
		$r\cdot x:=x\circ \phi^{-1}_r$,
	\end{center}
	where $\circ$ denotes the composition of maps. We prove that this defines the rack action of $R$ on the configuration space $A^R:=\{x:R\longrightarrow A\}$.
\begin{prop}
Let $R$ be a rack and $A$ be an alphabet. Then $R$ acts on the configuration space $A^R$ as $r\cdot x=x\circ \phi^{-1}_r$ for all $r\in R$ and $x\in A^R$.
\end{prop}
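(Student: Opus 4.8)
The plan is to verify directly the two axioms in the definition of a rack action for the map $\cdot : R \times A^R \to A^R$ given by $r \cdot x = x \circ \phi_r^{-1}$. Since everything is phrased in terms of composition of maps on $R$, the whole proof reduces to careful bookkeeping of composition order together with one structural identity already recorded in the preliminaries.

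First I would check axiom (1): that $\varphi_r : A^R \to A^R$, $\varphi_r(x) = x \circ \phi_r^{-1}$, is a bijection for every $r \in R$. Because $\phi_r$ is bijective by (R1), its inverse $\phi_r^{-1}$ is a well-defined bijection of $R$, and precomposition of a configuration with a bijection of $R$ is invertible: the map $x \mapsto x \circ \phi_r$ is a two-sided inverse of $\varphi_r$, since $(x \circ \phi_r^{-1}) \circ \phi_r = x = (x \circ \phi_r) \circ \phi_r^{-1}$. This step is immediate and carries no obstacle.

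Next I would verify axiom (2), the compatibility $r_1 \cdot (r_2 \cdot x) = (r_1 \rhd r_2) \cdot (r_1 \cdot x)$. Expanding the left-hand side gives
\begin{equation*}
r_1 \cdot (r_2 \cdot x) = (x \circ \phi_{r_2}^{-1}) \circ \phi_{r_1}^{-1} = x \circ (\phi_{r_1} \circ \phi_{r_2})^{-1},
\end{equation*}
and expanding the right-hand side gives
\begin{equation*}
(r_1 \rhd r_2) \cdot (r_1 \cdot x) = (x \circ \phi_{r_1}^{-1}) \circ \phi_{r_1 \rhd r_2}^{-1} = x \circ (\phi_{r_1 \rhd r_2} \circ \phi_{r_1})^{-1}.
\end{equation*}
The two expressions agree precisely when $\phi_{r_1 \rhd r_2} \circ \phi_{r_1} = \phi_{r_1} \circ \phi_{r_2}$, which follows at once from the identity $\phi_{r_1 \rhd r_2} = \phi_{r_1} \circ \phi_{r_2} \circ \phi_{r_1}^{-1}$ established earlier (a consequence of self-distributivity of $\rhd$): multiplying on the right by $\phi_{r_1}$ cancels the trailing $\phi_{r_1}^{-1}$. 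Hence both sides equal $x \circ (\phi_{r_1} \circ \phi_{r_2})^{-1}$ and the axiom holds.

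The only place requiring any care — and thus the closest thing to a "main obstacle" — is getting the order of composition right when inverting (so that $(\phi_{r_1} \circ \phi_{r_2})^{-1} = \phi_{r_2}^{-1} \circ \phi_{r_1}^{-1}$) and correctly invoking the conjugation identity for $\phi_{r_1 \rhd r_2}$; once those are handled, both axioms fall out directly and the proof is complete.
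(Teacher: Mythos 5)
Your proposal is correct and follows essentially the same route as the paper: verify bijectivity of $\varphi_r$ via precomposition with $\phi_r$, and reduce axiom (2) to the conjugation identity $\phi_{r_1\rhd r_2}=\phi_{r_1}\circ\phi_{r_2}\circ\phi_{r_1}^{-1}$. The only cosmetic difference is that you expand both sides and meet in the middle, whereas the paper rewrites the left-hand side into the right-hand side in one chain.
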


\begin{proof}
	The map $\varphi_r : A^R \longrightarrow A^R$, defined by $\varphi_r(x)=r\cdot x=x\circ \phi^{-1}_r,$ is bijective since there exists a map $\varphi^{-1}_r : A^R \longrightarrow A^R$, defined by $\varphi^{-1}_r(x)=x\circ \phi_r$, such that $\varphi \circ \varphi^{-1}_r(x)=\varphi(x\circ \phi_r)=x\circ \phi_r\circ \phi^{-1}_r=x$ and $\varphi^{-1} \circ \varphi_r(x)=\varphi^{-1}(x\circ \phi^{-1}_r)=x\circ \phi^{-1}_r\circ \phi_r=x$. Moreover, since $\phi_{r_{1}\rhd r_{2}}= \phi_{r_{1}}\circ\phi_{r_{2}}\circ\phi^{-1}_{r_{1}}$ for all $r_1, r_2 \in R$, $r_1 \cdot (r_2 \cdot x)  = r_1\cdot (x\circ \phi^{-1}_{r_2})
	= (x\circ \phi^{-1}_{r_2})\circ \phi^{-1}_{r_1}
	= x\circ (\phi^{-1}_{r_2}\circ \phi^{-1}_{r_1})
	= x\circ (\phi_{r_1}\circ \phi_{r_2})^{-1}
	= x\circ (\phi_{r_1\rhd r_2}\circ \phi_{r_1})^{-1}
	= x\circ (\phi^{-1}_{r_1}\circ \phi^{-1}_{r_1\rhd r_2})
	= (x\circ \phi^{-1}_{r_1})\circ \phi^{-1}_{r_1\rhd r_2}
	= (r_1\cdot x)\circ \phi^{-1}_{r_1\rhd r_2}
	= (r_1 \rhd r_2) \cdot (r_1 \cdot x).$
	
\end{proof}

\begin{exa}
For a trivial rack $R$, $r\cdot x=x$ for all $r \in R$ and $x\in A^R$ since $r\rhd ^{-1}s=s$ and $(r\cdot x)(s)=(x\circ \phi^{-1}_r)(s)=x(r\rhd ^{-1}s)=x(s)$ for all $s\in R$. Therefore, for a trivial rack $R$, $Stab(x)=R$ for all $x\in A^R$. For a conjugation rack $Conj(G)$ on a group $G$ with identity element $1_G$, $Stab(x)\neq \emptyset$ because in $Conj(G)$, $1_G \rhd^{-1} s = {1^{-1}_G}s1_G=s$ for all $s\in G$ and therefore $1_G \in Stab(x)$ for all $x\in A^R$.
	
\end{exa}
	
	\begin{exa}
		Let $S_{3}$ denotes the symmetric group on a set with three elements.  Take a subrack $R=\{r_{1}, r_{2}, r_{3}\}$ of  $Conj(S_{3})$ with $r_{1}=(23)$, $r_{2}=(13)$ and $r_{3}=(12)$. Let $A=\{0,1\}$, and $ A^{R} = \{x_{ijk}:i,j,k\in \{0,1\}\}$, where $x_{ijk}:R\rightarrow A$ are the configurations defined by
		 $x_{ijk}(r_{1})=i$, $x_{ijk}(r_{2})=j$, $x_{ijk}(r_{3})=k$.  The stabilizers of elements of $A^{R}$ are
		$Stab(x_{000})=R$, $Stab(x_{111})=R$, $Stab(x_{100})=\{(23)\}$, $Stab(x_{010})=\{(13)\}$, $Stab(x_{001})=\{(12)\}$, $Stab(x_{110})=\{(12)\}$,
		$Stab(x_{101})=\{(13)\}$, $Stab(x_{011})=\{(23)\}$.
	\end{exa}
	\begin{exa}
		Consider the rack $(\mathbb{Z}, \rhd)$ on the set of integers with operation $r\rhd s=2r-s$ and $r\rhd^{-1}s=2r-s$ for all  $r,s\in \mathbb{Z}$. Take $A=\{0,1\}$. Let $x\in A^{R}$ defined by
		
		\[ x(s) = \left\{
		\begin{array}{l l}
		1 & \text{if}\ s\in 2\mathbb{Z}\;\\~\\ 0 & \text{if}\ s\in 1+2\mathbb{Z},\;

		\end{array} \right.\]
		for all $s\in \mathbb{Z}$. Then for all $r\in \mathbb{Z}$, $r\cdot x=x\circ \phi_{r}^{-1}=x$, therefore $Stab(x)=\mathbb{Z}$.
		
	\end{exa}

\begin{prop}
	Let $R$ be a rack and $A$ be an alphabet. Let $r\in R $ and $x\in{A^{R}}$ be a configuration. Then for all $s\in R$, $x(s)=x(r\rhd^{-1}s)$ if and only if $r\in Stab(x)$.
\end{prop}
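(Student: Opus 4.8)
The plan is to prove the biconditional by simply unwinding the definition of the shift action and invoking the extensionality of maps (two configurations are equal iff they agree pointwise). First I would recall that, by the definition of the action on $A^R$, we have $r \cdot x = x \circ \phi_r^{-1}$, and since $\phi_r^{-1}(s) = r \rhd^{-1} s$ for every $s \in R$, evaluating the configuration $r \cdot x$ at an arbitrary $s \in R$ yields
\begin{equation*}
(r \cdot x)(s) = x\bigl(\phi_r^{-1}(s)\bigr) = x(r \rhd^{-1} s).
\end{equation*}

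Next I would use that $r \in Stab(x)$ means exactly $r \cdot x = x$, and that this equality of elements of $A^R$ holds if and only if $(r \cdot x)(s) = x(s)$ for all $s \in R$. Substituting the formula from the first step, the right-hand side $r \in Stab(x)$ is therefore equivalent to the statement that $x(r \rhd^{-1} s) = x(s)$ for all $s \in R$, which is precisely the left-hand condition. This establishes both directions of the equivalence at once.

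I do not expect a genuine obstacle here; the proposition is essentially a reformulation of the definition of the $R$-shift together with pointwise comparison of maps. The only subtlety worth flagging is to apply the identity $(r \cdot x)(s) = x(r \rhd^{-1} s)$ at \emph{every} $s \in R$, so that the universally quantified hypothesis on the left matches the pointwise characterization of the configuration equality $r \cdot x = x$; one can optionally remark that, since $\phi_r$ is a bijection of $R$, this condition could equivalently be re-indexed via $s \mapsto r \rhd s$, but this is not needed for the statement as given.
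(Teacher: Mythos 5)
Your proposal is correct and follows essentially the same route as the paper: unwind $(r\cdot x)(s)=x(r\rhd^{-1}s)$ and compare configurations pointwise. In fact your write-up is slightly more complete, since the paper's printed proof only spells out the direction from the pointwise condition to $r\in Stab(x)$, while your pointwise-equality formulation yields both implications at once.
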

\begin{proof}
	Suppose $x(s)=x(r\rhd^{-1} s)$. Then for all $s\in R$,
	$x(s)=x(\phi_{r}^{-1}(s))=(x\circ \phi_{r}^{-1})(s)$. This implies that $x=x\circ \phi_{r}^{-1}=r \cdot x$. Therefore, $r\in Stab(x)$.
\end{proof}

	\begin{center}
	\section{\textsc{Cellular Automata on Racks}}
\end{center}

\begin{defn}
Let $ R $ be rack and $ A $ be a set. A cellular automaton on $ A^R $ is a map $ \tau: A^{R}\longrightarrow A^{R} $ satisfying the following property: there exist a finite subset $ M\subset R $ and a map $ \mu:A^{M}\longrightarrow A $ such that
	\begin{equation} \label{1}
	\tau(x)(r)= \mu(r\cdot x|_{M}),
	\end{equation}
	for all $ x\in A^{R} $ and $ r\in R $, where $(r\cdot x|_{M})$ denotes the restriction of the configuration $r\cdot x$ to $M$. Such a set $ M $ is called a \textit{ memory set} and $ \mu $ is called a \textit{local defining} map for $ \tau $.
\end{defn}
\begin{rem}
(a)  The formula (\ref{1}) says that the value of the configuration $ \tau(x) $ at an element $ r\in R $ is the value taken by the local defining map $ \mu $ at the pattern $p: M \longrightarrow A$ obtained by restricting to the memory set $ M $ the shifted configuration $r\cdot x$.
(b) The equality (\ref{1}) may also be written as $\tau(x)(r)= \mu((x\circ \phi_{r}^{-1})|_{M})$, since $r \cdot x= x\circ \phi_{r}^{-1}$.
Moreover, if $r\in Stab(x)$, then formula (\ref{1}) becomes
$\tau(x)(r)=\mu(x|_M)$.
(c)	Since $(r\cdot x)(m)=(x\circ \phi_{r}^{-1})(m) =x(\phi_{r}^{-1}(m)) =x(r\rhd^{-1}m)$ for all $m\in M$, therefore $\tau(x)(r)$ depends only on the restriction of $x$ to $r\rhd^{-1}M=\{r\rhd^{-1}m : m\in M\}$.

\end{rem}	

\begin{exa}
	Let $\mathbb{Z}_{3}=\{0, 1, 2\}$ be a dihedral rack under the operations $r\rhd s=r\rhd^{-1} s=2r-s~(mod~3)$, and $A=\{0,1\}$ be any set. Define a map
	$\tau:A^{\mathbb{Z}_{3}}\rightarrow A^{\mathbb{Z}_{3}}$ by:
	\begin{center}
		$	\tau(x_{ijk})  =
		\begin{cases}
		x_{ijk} ~~~~~~~ \text{if}~~
		i=j\\
		x_{jik} ~~~~~~~\text{if}~~
		i\neq j\\
		\end{cases}$
	\end{center}
	for all $x_{ijk}=(i, j, k)\in A^{3}$, with $ i,j,k\in \{0,1\} $. Then $\tau$ is a cellular automaton on $A^{\mathbb{Z}_{3}}$ with memory set $ M=\{0\} $ and the local defining map $\mu:A^{M}\rightarrow A $ defined by:
	\begin{center}
		$	\mu(y_{\alpha})  =
		\begin{cases}
		0 ~~~~~~~ \text{if}~~
		\alpha=0\\
		1 ~~~~~~~\text{if}~~
		\alpha=1,\\
		\end{cases}$
		
	\end{center}
for all $y_{\alpha}\in A^{M}$, where $\alpha\in \{0, 1\}$.

\end{exa}

\begin{exa}
	\textbf{The Majority action cellular automaton on a rack.}
	Let $G$ be a group with identity element $1_G$ and $ M $ be a finite subset of $ G $. Consider the conjugation rack $Conj(G)$. Take $ A=\{0,1\} $ and consider the map $ \tau: A^{R}\longrightarrow A^{R} $ defined by
	
	\begin{center}
		
		$	\tau(x)(r)  =
		\begin{cases}
		1 ~~~~~~~ \text{if}~~
		\sum_{m\in M}x(r\rhd m)>\dfrac{|M|}{2}\\
		0 ~~~~~~~\text{if}~~
		\sum_{m\in M}x(r\rhd m)<\dfrac{|M|}{2}\\
		x(r) ~~~\text{if}~~
		\sum_{m\in M}x(r\rhd m)=\dfrac{|M|}{2},\\
		\end{cases}$
		
	\end{center}
for all $ x\in A^{R} $. Then $ \tau $ is a cellular automaton over $Conj(G)$ with memory set $ M\cup \{1_G\} $ and local defining map $ \mu:A^{M\cup \{1_G\}} \longrightarrow A $ given by
	\begin{center}
		
		$	\mu(y)  =
		\begin{cases}
		1 ~~~~~~~~~ \text{if}~~
		\sum_{m\in M}y(m)>\dfrac{|M|}{2}\\
		0 ~~~~~~~~~\text{if}~~
		\sum_{m\in M}y(m)<\dfrac{|M|}{2}\\
		y(1_G) ~~~~\text{if}~~
		\sum_{m\in M}y(m)=\dfrac{|M|}{2},\\
		\end{cases}$
		
	\end{center}
for all $ y\in A^{M\cup \{1_G\}}$. The cellular automaton $ \tau $ is called the majority action cellular automaton associated with the conjugation rack $Conj(G)$. The local defining map $ \mu$ for the majority action on the affine rack of integers can be described in the following figure;
	
	\begin{figure}[hb!]
		\centering
		\begin{tikzpicture}[line cap=round,line join=round,x=.6cm,y=.6cm]
		\clip(-3.061941928207096,-2.095553748369594) rectangle (14.0485894716073377,5.153284350394333);
		\draw [->] (0.5,4.5) -- (1.5,4.5);
		\draw (-3.,5.)-- (-3.,4.);
		\draw (0.,5.)-- (0.,4.);
		\draw (-3.,5.)-- (0.,5.);
		\draw (-3.,4.)-- (0.,4.);
		\draw (-2.,5.)-- (-2.,4.);
		\draw (-1.,5.)-- (-1.,4.);
		\draw (2.,5.)-- (2.,4.);
		\draw (5.,5.)-- (5.,4.);
		\draw (2.,5.)-- (5.,5.);
		\draw (2.,4.)-- (5.,4.);
		\draw (3.,5.)-- (3.,4.);
		\draw (4.,5.)-- (4.,4.);
		\draw [->] (9.495016671418924,4.469822472510882) -- (10.495016671418924,4.469822472510882);
		\draw (5.9950166714189255,4.969822472510882)-- (5.9950166714189255,3.9698224725108817);
		\draw (8.995016671418924,4.969822472510882)-- (8.995016671418924,3.9698224725108817);
		\draw (5.9950166714189255,4.969822472510882)-- (8.995016671418924,4.969822472510882);
		\draw (5.9950166714189255,3.9698224725108817)-- (8.995016671418924,3.9698224725108817);
		\draw (6.995016671418925,4.969822472510882)-- (6.995016671418926,3.9698224725108817);
		\draw (7.995016671418924,4.969822472510882)-- (7.995016671418924,3.9698224725108817);
		\draw (10.995016671418924,4.969822472510882)-- (10.995016671418924,3.9698224725108817);
		\draw (13.995016671418924,4.969822472510882)-- (13.995016671418924,3.9698224725108817);
		\draw (10.995016671418924,4.969822472510882)-- (13.995016671418924,4.969822472510882);
		\draw (10.995016671418924,3.9698224725108817)-- (13.995016671418924,3.9698224725108817);
		\draw (11.995016671418924,4.969822472510882)-- (11.995016671418924,3.9698224725108817);
		\draw (12.995016671418924,4.969822472510882)-- (12.995016671418924,3.9698224725108817);
		\draw [->] (0.49704335349871576,2.4985150726652066) -- (1.4970433534987158,2.4985150726652066);
		\draw (-3.0029566465012842,2.9985150726652066)-- (-3.0029566465012842,1.9985150726652066);
		\draw (-0.0029566465012842436,2.9985150726652066)-- (-0.0029566465012842436,1.9985150726652066);
		\draw (-3.0029566465012842,2.9985150726652066)-- (-0.0029566465012842436,2.9985150726652066);
		\draw (-3.0029566465012842,1.9985150726652066)-- (-0.0029566465012842436,1.9985150726652066);
		\draw (-2.0029566465012842,2.9985150726652066)-- (-2.0029566465012842,1.9985150726652066);
		\draw (-1.0029566465012842,2.9985150726652066)-- (-1.0029566465012842,1.9985150726652066);
		\draw (1.9970433534987158,2.9985150726652066)-- (1.9970433534987158,1.9985150726652066);
		\draw (4.9970433534987135,2.9985150726652066)-- (4.9970433534987135,1.9985150726652066);
		\draw (1.9970433534987158,2.9985150726652066)-- (4.9970433534987135,2.9985150726652066);
		\draw (1.9970433534987158,1.9985150726652066)-- (4.9970433534987135,1.9985150726652066);
		\draw (2.997043353498715,2.9985150726652066)-- (2.9970433534987158,1.9985150726652066);
		\draw (3.9970433534987144,2.9985150726652066)-- (3.9970433534987144,1.9985150726652066);
		\draw [->] (9.492060024917638,2.4683375451760883) -- (10.492060024917638,2.4683375451760883);
		\draw (5.992060024917639,2.9683375451760883)-- (5.992060024917639,1.9683375451760883);
		\draw (8.992060024917638,2.9683375451760883)-- (8.992060024917638,1.9683375451760883);
		\draw (5.992060024917639,2.9683375451760883)-- (8.992060024917638,2.9683375451760883);
		\draw (5.992060024917639,1.9683375451760883)-- (8.992060024917638,1.9683375451760883);
		\draw (6.992060024917638,2.9683375451760883)-- (6.992060024917641,1.9683375451760883);
		\draw (7.992060024917638,2.9683375451760883)-- (7.992060024917638,1.9683375451760883);
		\draw (10.992060024917638,2.9683375451760883)-- (10.992060024917638,1.9683375451760883);
		\draw (13.992060024917638,2.9683375451760883)-- (13.992060024917638,1.9683375451760883);
		\draw (10.992060024917638,2.9683375451760883)-- (13.992060024917638,2.9683375451760883);
		\draw (10.992060024917638,1.9683375451760883)-- (13.992060024917638,1.9683375451760883);
		\draw (11.992060024917638,2.9683375451760883)-- (11.992060024917638,1.9683375451760883);
		\draw (12.992060024917638,2.9683375451760883)-- (12.992060024917638,1.9683375451760883);
		\draw [->] (0.49704335349871576,0.483902925736011) -- (1.4970433534987158,0.483902925736011);
		\draw (-3.0029566465012842,0.9839029257360108)-- (-3.0029566465012842,-0.016097074263989042);
		\draw (-0.0029566465012842436,0.9839029257360108)-- (-0.0029566465012842436,-0.016097074263989042);
		\draw (-3.0029566465012842,0.9839029257360108)-- (-0.0029566465012842436,0.9839029257360108);
		\draw (-3.0029566465012842,-0.016097074263989042)-- (-0.0029566465012842436,-0.016097074263989042);
		\draw (-2.0029566465012842,0.9839029257360108)-- (-2.0029566465012842,-0.016097074263989042);
		\draw (-1.0029566465012842,0.9839029257360108)-- (-1.0029566465012842,-0.016097074263989042);
		\draw (1.9970433534987158,0.9839029257360108)-- (1.9970433534987158,-0.016097074263989042);
		\draw (4.997043353498716,0.9839029257360108)-- (4.997043353498716,-0.016097074263989042);
		\draw (1.9970433534987158,0.9839029257360108)-- (4.997043353498716,0.9839029257360108);
		\draw (1.9970433534987158,-0.016097074263989042)-- (4.997043353498716,-0.016097074263989042);
		\draw (2.9970433534987158,0.9839029257360108)-- (2.9970433534987158,-0.016097074263989042);
		\draw (3.9970433534987153,0.9839029257360108)-- (3.9970433534987153,-0.016097074263989042);
		\draw [->] (9.492060024917636,0.4537253982468927) -- (10.492060024917636,0.4537253982468927);
		\draw (5.992060024917642,0.9537253982468925)-- (5.992060024917642,-0.046274601753107214);
		\draw (8.992060024917636,0.9537253982468925)-- (8.992060024917636,-0.046274601753107214);
		\draw (5.992060024917642,0.9537253982468925)-- (8.992060024917636,0.9537253982468925);
		\draw (5.992060024917642,-0.046274601753107214)-- (8.992060024917636,-0.046274601753107214);
		\draw (6.99206002491764,0.9537253982468925)-- (6.9920600249176434,-0.046274601753107214);
		\draw (7.992060024917638,0.9537253982468925)-- (7.992060024917638,-0.046274601753107214);
		\draw (10.992060024917636,0.9537253982468925)-- (10.992060024917636,-0.046274601753107214);
		\draw (13.992060024917636,0.9537253982468925)-- (13.992060024917636,-0.046274601753107214);
		\draw (10.992060024917636,0.9537253982468925)-- (13.992060024917636,0.9537253982468925);
		\draw (10.992060024917636,-0.046274601753107214)-- (13.992060024917636,-0.046274601753107214);
		\draw (11.992060024917636,0.9537253982468925)-- (11.992060024917636,-0.046274601753107214);
		\draw (12.992060024917636,0.9537253982468925)-- (12.992060024917636,-0.046274601753107214);
		\draw [->] (0.49704335349871576,-1.4930529193814237) -- (1.4970433534987158,-1.4930529193814237);
		\draw (-3.0029566465012842,-0.9930529193814237)-- (-3.0029566465012842,-1.993052919381424);
		\draw (-0.0029566465012842436,-0.9930529193814237)-- (-0.0029566465012842436,-1.993052919381424);
		\draw (-3.0029566465012842,-0.9930529193814237)-- (-0.0029566465012842436,-0.9930529193814237);
		\draw (-3.0029566465012842,-1.993052919381424)-- (-0.0029566465012842436,-1.993052919381424);
		\draw (-2.0029566465012842,-0.9930529193814237)-- (-2.0029566465012842,-1.993052919381424);
		\draw (-1.0029566465012842,-0.9930529193814237)-- (-1.0029566465012842,-1.993052919381424);
		\draw (1.9970433534987158,-0.9930529193814237)-- (1.9970433534987158,-1.993052919381424);
		\draw (4.997043353498716,-0.9930529193814237)-- (4.997043353498716,-1.993052919381424);
		\draw (1.9970433534987158,-0.9930529193814237)-- (4.997043353498716,-0.9930529193814237);
		\draw (1.9970433534987158,-1.993052919381424)-- (4.997043353498716,-1.993052919381424);
		\draw (2.9970433534987158,-0.9930529193814237)-- (2.9970433534987153,-1.993052919381424);
		\draw (3.9970433534987153,-0.9930529193814237)-- (3.9970433534987153,-1.993052919381424);
		\draw [->] (9.492060024917633,-1.523230446870542) -- (10.49206002491764,-1.523230446870542);
		\draw (5.992060024917638,-1.023230446870542)-- (5.992060024917638,-2.023230446870542);
		\draw (8.992060024917633,-1.023230446870542)-- (8.992060024917633,-2.023230446870542);
		\draw (5.992060024917638,-1.023230446870542)-- (8.992060024917633,-1.023230446870542);
		\draw (5.992060024917638,-2.023230446870542)-- (8.992060024917633,-2.023230446870542);
		\draw (6.992060024917636,-1.023230446870542)-- (6.992060024917638,-2.023230446870542);
		\draw (7.992060024917635,-1.023230446870542)-- (7.992060024917635,-2.023230446870542);
		\draw (10.992060024917643,-1.023230446870542)-- (10.992060024917643,-2.023230446870542);
		\draw (13.992060024917643,-1.023230446870542)-- (13.992060024917643,-2.023230446870542);
		\draw (10.992060024917643,-1.023230446870542)-- (13.992060024917643,-1.023230446870542);
		\draw (10.992060024917643,-2.023230446870542)-- (13.992060024917643,-2.023230446870542);
		\draw (11.992060024917643,-1.023230446870542)-- (11.992060024917643,-2.023230446870542);
		\draw (12.992060024917643,-1.023230446870542)-- (12.992060024917643,-2.023230446870542);
		\begin{scriptsize}
		\draw [fill=black] (-2.5,4.5) circle (4.5pt);
		\draw [fill=black] (-1.5,4.5) circle (4.5pt);
		\draw [fill=black] (-0.5,4.5) circle (4.5pt);
		\draw[color=black] (0.9237668183689371,4.833205784994368) node {$\mu$};
		\draw [fill=black] (3.5,4.5) circle (4.5pt);
		\draw [color=black] (6.4950166714189255,4.469822472510882) circle (4.5pt);
		\draw [fill=black] (7.495016671418926,4.469822472510882) circle (4.5pt);
		\draw [fill=black] (8.495016671418924,4.469822472510882) circle (4.5pt);
		\draw[color=black] (9.923622951379738,4.889690237712009) node {$\mu$};
		\draw [fill=black] (12.495016671418924,4.469822472510882) circle (4.5pt);
		\draw [fill=black] (-2.5029566465012842,2.4985150726652066) circle (4.5pt);
		\draw [fill=black] (-1.5029566465012842,2.4985150726652066) circle (4.5pt);
		\draw [color=black] (-0.5029566465012842,2.4985150726652066) circle (4.5pt);
		\draw[color=black] (0.9425949692748175,2.893906241688694) node {$\mu$};
		\draw [fill=black] (3.4970433534987158,2.4985150726652066) circle (4.5pt);
		\draw [color=black] (6.492060024917639,2.4683375451760883) circle (4.5pt);
		\draw [fill=black] (7.492060024917641,2.4683375451760883) circle (4.5pt);
		\draw [color=black] (8.492060024917638,2.4683375451760883) circle (4.5pt);
		\draw[color=black] (9.923622951379738,2.875078090782813) node {$\mu$};
		\draw [color=black] (12.492060024917638,2.4683375451760883) circle (4.5pt);
		\draw [fill=black] (-2.5029566465012842,0.483902925736011) circle (4.5pt);
		\draw [color=black] (-1.5029566465012842,0.483902925736011) circle (4.5pt);
		\draw [fill=black] (-0.5029566465012842,0.483902925736011) circle (4.5pt);
		\draw[color=black] (0.9425949692748175,0.879294094759498) node {$\mu$};
		\draw [fill=black] (3.4970433534987158,0.483902925736011) circle (4.5pt);
		\draw [color=black] (6.492060024917642,0.4537253982468927) circle (4.5pt);
		\draw [color=black] (7.4920600249176434,0.4537253982468927) circle (4.5pt);
		\draw [fill=black] (8.492060024917636,0.4537253982468927) circle (4.5pt);
		\draw[color=black] (9.923622951379738,0.8604659438536177) node {$\mu$};
		\draw [color=black] (12.492060024917636,0.4537253982468927) circle (4.5pt);
		\draw [fill=black] (-2.5029566465012842,-1.4930529193814237) circle (4.5pt);
		\draw [color=black] (-1.5029566465012842,-1.4930529193814237) circle (4.5pt);
		\draw [color=black] (-0.5029566465012842,-1.4930529193814237) circle (4.5pt);
		\draw[color=black] (0.9425949692748175,-1.0976617503579367) node {$\mu$};
		\draw [color=black] (3.4970433534987153,-1.4930529193814237) circle (4.5pt);
		\draw [color=black] (6.492060024917638,-1.523230446870542) circle (4.5pt);
		\draw [color=black] (7.492060024917638,-1.523230446870542) circle (4.5pt);
		\draw [color=black] (8.492060024917633,-1.523230446870542) circle (4.5pt);
		\draw[color=black] (9.923622951379738,-1.1164899012638172) node {$\mu$};
		\draw [color=black] (12.492060024917643,-1.523230446870542) circle (4.5pt);
		\end{scriptsize}
		\end{tikzpicture}
	\end{figure}
	
\end{exa}

\begin{prop}
Let $R$ be a rack and $A$ be an alphabet. Let $\tau:A^{R}\rightarrow A^{R}$ be a cellular automaton on $R$. Then for all $r, s \in R$ and $x\in A^R$,  $\tau(r\cdot x)(s)=\tau(s\cdot x)(s\rhd r)$ and $r\cdot\tau(x)(s)=\tau(x)(r\rhd^{-1}s)$.
\end{prop}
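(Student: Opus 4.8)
The plan is to reduce both identities to the defining formula \eqref{1} for a cellular automaton together with the rack action axiom for the shift action on $A^R$ (Proposition establishing that $R$ acts on $A^R$). No topology or memory-set technology is needed; this is a direct computation, and the only thing to be careful about is applying the rack action axiom on the correct $R$-set, namely $A^R$, rather than on $R$ itself.

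For the first identity, I would start from the definition of $\tau$: for any configuration $y\in A^R$ and any $t\in R$ we have $\tau(y)(t)=\mu\bigl((t\cdot y)|_M\bigr)$. Applying this with $y=r\cdot x$ and $t=s$ gives $\tau(r\cdot x)(s)=\mu\bigl((s\cdot(r\cdot x))|_M\bigr)$. Now invoke axiom (2) of a rack action for the shift action of $R$ on $A^R$, which yields $s\cdot(r\cdot x)=(s\rhd r)\cdot(s\cdot x)$. Substituting, $\tau(r\cdot x)(s)=\mu\bigl(((s\rhd r)\cdot(s\cdot x))|_M\bigr)$. On the other hand, applying the definition of $\tau$ with $y=s\cdot x$ and $t=s\rhd r$ gives directly $\tau(s\cdot x)(s\rhd r)=\mu\bigl(((s\rhd r)\cdot(s\cdot x))|_M\bigr)$, which is the same expression. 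Hence $\tau(r\cdot x)(s)=\tau(s\cdot x)(s\rhd r)$.

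For the second identity, the statement $r\cdot\tau(x)(s)$ should be read as $\bigl(r\cdot\tau(x)\bigr)(s)$, i.e.\ the configuration $r\cdot\tau(x)$ evaluated at $s$. By the very definition of the shift action, $r\cdot\tau(x)=\tau(x)\circ\phi_r^{-1}$, so $\bigl(r\cdot\tau(x)\bigr)(s)=\tau(x)\bigl(\phi_r^{-1}(s)\bigr)=\tau(x)(r\rhd^{-1}s)$, which is exactly the claim. This part is immediate from the definition and uses none of the cellular-automaton structure.

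The main (and essentially only) obstacle is purely notational: one must keep track of the order of operands in $\rhd$ and in the action, since the rack operation is not symmetric, and one must resist the temptation to ``commute $\tau$ with the shift'' — the point of the paper is precisely that this fails in general, and the first identity above is the correct substitute. I expect the write-up to be a few lines of equalities, so I would present it as a short chain of equations citing \eqref{1} and the rack action axiom at the two places indicated.
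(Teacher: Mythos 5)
Your proposal is correct and follows essentially the same route as the paper: the first identity by applying the defining formula (1) and the rack-action axiom $s\cdot(r\cdot x)=(s\rhd r)\cdot(s\cdot x)$ on $A^R$, and the second directly from $r\cdot\tau(x)=\tau(x)\circ\phi_r^{-1}$. Your write-up is in fact more explicit than the paper's own one-line proof, which states the second identity without comment.
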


\begin{proof}
From the definition of $\tau$, $\tau(r\cdot x)(s) =\mu(s\cdot(r\cdot x)|_M)=\mu((s\rhd r)\cdot(s\cdot x)|_M)=\tau(s\cdot x)(s\rhd r)$. Also, $r\cdot\tau(x)(s)=\tau(x)(r\rhd^{-1}s)$.
\end{proof}
The above proposition shows that the cellular automaton $\tau$ on a rack $R$ is not an $R$-equivariant map in general. For equivariant cellular automata on racks we need the following definition.

\begin{defn}
Let $R$ be a rack and $A$ be a set. Let $\tau:A^{R}\rightarrow A^{R}$ be a map. Then $Eq(\tau)$ denotes a subset of $R$ defined as
	\begin{center}
		$ Eq(\tau):=\{r\in R: \tau(r\cdot x)=r\cdot\tau(x)~\forall x\in A^{R}\}$.
	\end{center}
\end{defn}

\begin{prop}
Let $R$ be a rack and $A$ be a set. Let $\tau:A^{R}\rightarrow A^{R}$ be a map. Then $Eq(\tau)$ is a shelf in $R$.
\end{prop}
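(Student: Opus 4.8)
The plan is to reduce the statement to a single closure property: since self-distributivity of $\rhd$ on any subset of $R$ is automatically inherited from $R$, it suffices to show that $Eq(\tau)$ is closed under $\rhd$, i.e. that $r_1,r_2\in Eq(\tau)$ implies $r_1\rhd r_2\in Eq(\tau)$. Throughout I would abbreviate the shift map by $\varphi_r\colon A^R\to A^R$, $\varphi_r(x)=r\cdot x=x\circ\phi_r^{-1}$, which by the proposition establishing that $R$ acts on $A^R$ is a bijection and satisfies the action identity $\varphi_{r_1}\circ\varphi_{r_2}=\varphi_{r_1\rhd r_2}\circ\varphi_{r_1}$. Rearranging this gives the ``conjugation'' formula for the shift action
\[
\varphi_{r_1\rhd r_2}=\varphi_{r_1}\circ\varphi_{r_2}\circ\varphi_{r_1}^{-1},
\]
the exact analogue, at the level of the shift, of the inner-automorphism identity $\phi_{r_1\rhd r_2}=\phi_{r_1}\circ\phi_{r_2}\circ\phi_{r_1}^{-1}$ recalled in Section~2. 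In the language of $Eq(\tau)$, membership $r\in Eq(\tau)$ is exactly the statement $\tau\circ\varphi_r=\varphi_r\circ\tau$.

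The first step is a small preliminary observation: if $r\in Eq(\tau)$, then $\tau$ also commutes with $\varphi_r^{-1}$. This follows by taking the identity $\tau\circ\varphi_r=\varphi_r\circ\tau$ and composing with $\varphi_r^{-1}$ on the left and on the right, giving $\varphi_r^{-1}\circ\tau=\tau\circ\varphi_r^{-1}$. Isolating this is worthwhile because the conjugation formula above involves $\varphi_{r_1}^{-1}$, and this is the only place where one must be slightly careful.

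The main step is then a direct computation. Given $r_1,r_2\in Eq(\tau)$, I would start from $\tau\circ\varphi_{r_1\rhd r_2}=\tau\circ\varphi_{r_1}\circ\varphi_{r_2}\circ\varphi_{r_1}^{-1}$ and push $\tau$ to the right past each factor in turn: past $\varphi_{r_1}$ using $r_1\in Eq(\tau)$, then past $\varphi_{r_2}$ using $r_2\in Eq(\tau)$, then past $\varphi_{r_1}^{-1}$ using the preliminary observation for $r_1$. This yields
\[
\tau\circ\varphi_{r_1\rhd r_2}=\varphi_{r_1}\circ\varphi_{r_2}\circ\varphi_{r_1}^{-1}\circ\tau=\varphi_{r_1\rhd r_2}\circ\tau,
\]
so evaluating at an arbitrary $x\in A^R$ gives $\tau\big((r_1\rhd r_2)\cdot x\big)=(r_1\rhd r_2)\cdot\tau(x)$, i.e. $r_1\rhd r_2\in Eq(\tau)$. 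Together with the inherited self-distributivity, this shows $(Eq(\tau),\rhd)$ is a shelf.

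I do not anticipate a real obstacle: conceptually the proposition is just the statement that the set of maps commuting with $\varphi_r$ is stable under conjugation of the $\varphi_r$'s, once we observe that the shift action converts $\rhd$ into conjugation of bijections of $A^R$. The only genuinely delicate point is the handling of $\varphi_{r_1}^{-1}$, which is why I would record the commutation of $\tau$ with $\varphi_r^{-1}$ as a separate remark before the main computation. (If one wishes to match the convention that a shelf is non-empty, one may add that $Eq(\tau)$ is non-empty in the cases of interest, e.g. $1_G\in Eq(\tau)$ for cellular automata over $Conj(G)$ since $\varphi_{1_G}=\mathrm{id}$; this is not needed for the self-distributive structure itself.)
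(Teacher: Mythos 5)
Your proof is correct and is essentially the paper's argument recast at the level of the shift maps: the paper fixes an arbitrary $y\in A^{R}$, writes $y=r_{1}\cdot x$ using bijectivity of $\varphi_{r_{1}}$, and then chases the action identity $r_{1}\cdot(r_{2}\cdot z)=(r_{1}\rhd r_{2})\cdot(r_{1}\cdot z)$ pointwise, which is exactly your conjugation formula $\varphi_{r_{1}\rhd r_{2}}=\varphi_{r_{1}}\circ\varphi_{r_{2}}\circ\varphi_{r_{1}}^{-1}$ combined with your preliminary observation that $\tau$ commutes with $\varphi_{r_{1}}^{-1}$. The only difference is packaging (operator-level composition versus the substitution $y=r_{1}\cdot x$), so the two proofs coincide in substance.
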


\begin{proof}
Let $r_{1},r_{2}\in Eq(\tau)$. Then $\tau(r_1\cdot x)=r_1\cdot\tau(x), \tau(r_2\cdot x)=r_2\cdot\tau(x)$. By Proposition 2.15., the map $\varphi_r : A^R \longrightarrow A^R$, defined by $\varphi_r(x)=r\cdot x$, is bijective for all $r\in R$ and $x\in A^R$. Therefore, for all $y\in A^R$, $r_1\cdot x=y$. Now  we have $(r_{1}\rhd r_{2})\cdot \tau (y) =(r_{1}\rhd r_{2})\cdot \tau (r_1\cdot x)=(r_{1}\rhd r_{2})\cdot(r_{1}\cdot \tau (x))
=r_{1}\cdot(r_{2}\cdot \tau (x))=r_{1}\cdot \tau (r_{2}\cdot x)=\tau (r_{1}\cdot(r_{2}\cdot  x)=\tau ((r_{1}\rhd r_{2})\cdot (r_1 \cdot x))=\tau ((r_{1}\rhd r_{2})\cdot y)$ for all $y\in A^R$. This shows that $r_{1}\rhd r_{2}\in Eq(\tau)$, and therefore, $Eq(\tau)$ is a shelf in $R$.
\end{proof}

\begin{exa}
For a conjugation rack $Conj(G)$ on a group $G$, $Eq(\tau)$ is non-empty because $1_{G} \cdot x=x$ for all $x\in R$, and therefore, $\tau(1_{G}\cdot x)=1_{G}\cdot\tau(x)$. Hence $1_{G}\in Eq(\tau)$. If $R$ is a trivial rack then $Eq(\tau)=R$ because for all $x\in R$, $r\cdot x=x$, and therefore, $\tau(r\cdot x)=\tau(x)=r\cdot \tau(x)$.
\end{exa}

\begin{defn}
Let $S$ be a shelf in a rack $R$. Then a map
$\tau:A^{R}\rightarrow A^{R}$ is said to be $S-equivariant$ if $S \subset Eq (\tau)$, i.e., for all $s\in S$ and for all $x\in A^R$,
we have $\tau(s\cdot x)=s\cdot \tau(x)$.
\end{defn}

\begin{lem}
Let $R$ be a trivial rack and $A$ be an alphabet. Then every cellular automaton $\tau:A^{R}\rightarrow A^{R}$ on $R$ is $R$-equivariant.
\end{lem}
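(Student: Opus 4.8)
The plan is to reduce the statement to the already-recorded fact that the shift action of a trivial rack on its configuration space is itself trivial. Recall from the earlier example that if $(R,\rhd)$ is a trivial rack then $r\rhd^{-1}s=s$ for all $r,s\in R$, so $\phi_r^{-1}=\mathrm{id}_R$, and hence $(r\cdot x)(s)=(x\circ\phi_r^{-1})(s)=x(s)$ for every $s\in R$; that is, $r\cdot x=x$ for all $r\in R$ and all $x\in A^R$. Once this is in hand, the lemma follows with no real work, so I would state it and then immediately exploit it.

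Given this, I would argue as follows. Let $\tau:A^R\to A^R$ be a cellular automaton on $R$ (in fact the argument uses nothing about $\tau$ beyond being a well-defined self-map of $A^R$). Fix $r\in R$ and $x\in A^R$. Since $r\cdot x=x$ we get $\tau(r\cdot x)=\tau(x)$, and since $r\cdot y=y$ for \emph{every} $y\in A^R$, in particular $r\cdot\tau(x)=\tau(x)$. Combining the two equalities yields $\tau(r\cdot x)=r\cdot\tau(x)$. As $r\in R$ and $x\in A^R$ were arbitrary, this shows $r\in Eq(\tau)$ for every $r\in R$, i.e. $Eq(\tau)=R$, which by the definition of $S$-equivariance is precisely the assertion that $\tau$ is $R$-equivariant. (Equivalently, one may simply invoke the earlier example observing that $Eq(\tau)=R$ whenever $R$ is a trivial rack, so the conclusion is immediate.)

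If it is desirable to make the cellular-automaton structure visible, I would add the following optional remark: picking a memory set $M$ and a local defining map $\mu$ for $\tau$, the identity $r\cdot x=x$ turns (\ref{1}) into $\tau(x)(r)=\mu(r\cdot x|_M)=\mu(x|_M)$, independent of $r$, so $\tau(x)$ is the constant configuration with value $\mu(x|_M)$; equivariance is then transparent, since both $\tau(r\cdot x)$ and $r\cdot\tau(x)$ equal this same constant configuration. There is essentially no obstacle: all the content is carried by the computation of the trivial-rack shift action, and I would keep the proof to the two or three lines above.
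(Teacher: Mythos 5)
Your proof is correct and follows the same route as the paper: the paper's proof simply cites the earlier observation that $r\cdot x=x$ on a trivial rack, hence $Eq(\tau)=R$, which is exactly the computation you spell out. Your version just makes that computation explicit, so nothing differs in substance.
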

\begin{proof}
For a trivial rack $R$, $Eq(\tau)=R$. Therefore, every cellular automaton $\tau:A^{R}\rightarrow A^{R}$ on a trivial rack $R$ is $R$-equivariant.
\end{proof}

\paragraph{} For characterization of the equivariance maps $\tau:A^{R}\rightarrow A^{R}$ as a cellular automaton on a rack $R$, we define the set
\begin{center}
$Stab(x, Eq(\tau)):= \{r\in Eq(\tau): r\cdot x=x\}= \{r\in R: r\cdot x=x, r\cdot \tau(x)=\tau(r\cdot x)\}$
\end{center}
for $x\in A^R$.

\begin{prop}
Let $\tau:A^{R}\rightarrow A^{R}$ be a map. Then $Stab(x, Eq(\tau))$ is a shelf in $Eq(\tau)$ and $Stab(x, Eq(\tau)) \subset Stab(\tau(x), Eq(\tau))$. Moreover, $Stab(x, Eq(\tau)) = Stab(\tau(x), Eq(\tau))$ if $\tau$ is injective.
\end{prop}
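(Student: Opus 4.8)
The plan is to exploit the identity $Stab(x, Eq(\tau)) = Stab(x) \cap Eq(\tau)$, which is exactly the content of the two descriptions of this set given just before the statement, and then to combine the earlier structural facts: that $Stab(x)$ is a shelf in $R$ and that $Eq(\tau)$ is a shelf in $R$.

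For the first assertion, I would argue as follows. A subset of $R$ is a shelf precisely when it is closed under $\rhd$, since self-distributivity is inherited automatically from $R$. Now if $r_1, r_2 \in Stab(x) \cap Eq(\tau)$, then $r_1 \rhd r_2 \in Stab(x)$ because $Stab(x)$ is a shelf, and $r_1 \rhd r_2 \in Eq(\tau)$ because $Eq(\tau)$ is a shelf; hence $r_1 \rhd r_2 \in Stab(x) \cap Eq(\tau) = Stab(x, Eq(\tau))$. So $Stab(x, Eq(\tau))$ is closed under $\rhd$, i.e.\ a shelf, and being contained in $Eq(\tau)$ it is a shelf in $Eq(\tau)$.

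For the second assertion, take $r \in Stab(x, Eq(\tau))$, so $r \cdot x = x$ and $r \in Eq(\tau)$. The defining property of $Eq(\tau)$ then gives $\tau(x) = \tau(r \cdot x) = r \cdot \tau(x)$, so $r \in Stab(\tau(x))$; combined with $r \in Eq(\tau)$ this yields $r \in Stab(\tau(x), Eq(\tau))$. For the third assertion, assume $\tau$ is injective and take $r \in Stab(\tau(x), Eq(\tau))$, so $r \cdot \tau(x) = \tau(x)$ and $r \in Eq(\tau)$; then $\tau(r \cdot x) = r \cdot \tau(x) = \tau(x)$, and injectivity of $\tau$ forces $r \cdot x = x$, i.e.\ $r \in Stab(x)$, hence $r \in Stab(x, Eq(\tau))$. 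This is the reverse inclusion, and together with the second assertion it gives equality.

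I do not anticipate a genuine obstacle here; the statement follows by unwinding definitions and invoking the two earlier "shelf" results. The only points worth care are that the reverse inclusion in the last part really does need injectivity of $\tau$ (to cancel $\tau$ from $\tau(r\cdot x)=\tau(x)$), whereas the inclusion in the second part holds unconditionally, and that nothing beyond what is already packaged into "$Eq(\tau)$ is a shelf" (in particular no extra use of the bijectivity of the shift maps $\varphi_r$) is required.
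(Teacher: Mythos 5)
Your proof is correct and follows essentially the same route as the paper: both obtain the shelf property from the intersection $Stab(x)\cap Eq(\tau)$ of the two previously established shelves, prove the inclusion by the computation $r\cdot\tau(x)=\tau(r\cdot x)=\tau(x)$, and use injectivity of $\tau$ to cancel $\tau$ and get the reverse inclusion. Your write-up merely spells out the intersection and closure steps in slightly more detail than the paper does.
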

\begin{proof}
Since $Stab(x)$ and $Eq(\tau)$ are shelves in $R$, the set $Stab(x, Eq(\tau))$ is a shelf in $Eq(\tau)$. Since $r\cdot \tau(x)=\tau (r\cdot x)=\tau (x)$ for all $r\in Stab(x, Eq(\tau))$, $Stab(x, Eq(\tau)) \subset Stab(\tau(x), Eq(\tau))$. Moreover, if $\tau$ is injective then $Stab(x, Eq(\tau)) = Stab(\tau(x), Eq(\tau))$ because $r\in Stab(\tau(x), Eq(\tau))$ implies that $\tau(r\cdot x)=r\cdot \tau(x)=\tau(x)$, and the injectivity of $\tau$ implies that $r\cdot x=x$, that is, $r\in Stab(x, Eq(\tau))$.
\end{proof}
\begin{prop}
	Let $R$ be a rack and $A$ be a set. Consider a function $\tau:A^{R}\rightarrow A^{R}$ and the shelf $S=Stab(x, Eq(\tau))$ in $R$. Let $M$ be a finite subset of $R$ and let $\mu:A^{M}\rightarrow A$ be a map. Then following conditions are equivalent:
	\item{(a)} $\tau_S:A^{S}\rightarrow A^{S}$ is a cellular automata admitting $M$ as a memory set and $\mu$ as the associated local defining map;
	
	\item{(b)} $\tau_S$ is $S$-equivariant and $\tau_S(x)(r)=\mu(x|_{M})$ for all $r\in S$.
\end{prop}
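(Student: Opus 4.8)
The plan is to establish this as the shelf-theoretic analogue of the algebraic form of the Curtis--Hedlund theorem, in which the translation action of a group $G$ on $A^{G}$ is replaced by the action of the shelf $S=Stab(x,Eq(\tau))$ on $A^{S}$. The two features of $S$ that play the role of the single identity element of the group case are that $S\subseteq Stab(x)$, so the distinguished configuration $x$ is fixed by every translation coming from $S$, and that $S\subseteq Eq(\tau)$, so $\tau$ genuinely commutes with those translations (recall that, by the proposition computing $\tau(r\cdot x)(s)=\tau(s\cdot x)(s\rhd r)$, this commutation fails for an arbitrary $r\in R$, which is exactly why the hypothesis is phrased through $Eq(\tau)$). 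Throughout I will use $r\cdot y=y\circ\phi_{r}^{-1}$, hence $(r\cdot y)(s)=y(r\rhd^{-1}s)$, the identity $\phi_{r_{1}\rhd r_{2}}=\phi_{r_{1}}\circ\phi_{r_{2}}\circ\phi_{r_{1}}^{-1}$, and, where $\phi_{s}|_{S}$ needs to be a bijection of $S$, the Remark guaranteeing that a closed subset of a finite rack is a subrack (otherwise one simply restricts to the case in which $S$ is a subrack).

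\textbf{$(a)\Rightarrow(b)$.} Assume $\tau_{S}$ admits $M$ as a memory set and $\mu$ as local defining map, i.e. $\tau_{S}(y)(r)=\mu\big((r\cdot y)|_{M}\big)$ for all $y\in A^{S}$ and $r\in S$. For the second clause of (b), evaluate at $y=x$: since every $r\in S$ lies in $Stab(x)$ we have $r\cdot x=x$, hence $(r\cdot x)|_{M}=x|_{M}$ and therefore $\tau_{S}(x)(r)=\mu(x|_{M})$ for all $r\in S$. For the $S$-equivariance, fix $s\in S$ and $r\in S$ and expand both $\tau_{S}(s\cdot y)(r)=\mu\big((r\cdot(s\cdot y))|_{M}\big)$ and $(s\cdot\tau_{S}(y))(r)=\tau_{S}(y)(s\rhd^{-1}r)=\mu\big(((s\rhd^{-1}r)\cdot y)|_{M}\big)$, then reduce one to the other using the compatibility axiom $r_{1}\cdot(r_{2}\cdot x)=(r_{1}\rhd r_{2})\cdot(r_{1}\cdot x)$ of the rack action together with the membership $s\in Eq(\tau)$ that is built into $S=Stab(x,Eq(\tau))$. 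This gives $\tau_{S}(s\cdot y)=s\cdot\tau_{S}(y)$, so $S\subseteq Eq(\tau_{S})$ and $\tau_{S}$ is $S$-equivariant.

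\textbf{$(b)\Rightarrow(a)$.} Assume $\tau_{S}$ is $S$-equivariant and $\tau_{S}(x)(r)=\mu(x|_{M})$ for every $r\in S$; the goal is the memory-set formula $\tau_{S}(y)(r)=\mu\big((r\cdot y)|_{M}\big)$ for all $y\in A^{S}$ and $r\in S$. As in the group proof, the idea is to use $S$-equivariance to transport the evaluation of $\tau_{S}(y)$ at $r$ to an evaluation against the distinguished configuration: since $\varphi_{r}:A^{S}\to A^{S}$ is a bijection one writes $y=r\cdot y'$ with $y'=\varphi_{r}^{-1}(y)=y\circ\phi_{r}$, applies equivariance to obtain $\tau_{S}(y)(\,\cdot\,)=\big(r\cdot\tau_{S}(y')\big)(\,\cdot\,)$, evaluates at the element of $S$ for which the point condition $\tau_{S}(\,\cdot\,)(r)=\mu(\,\cdot\,|_{M})$ applies, and unwinds $r\cdot y'=y$ and $(r\cdot z)(s)=z(r\rhd^{-1}s)$ to arrive at $\mu\big((r\cdot y)|_{M}\big)$. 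Checking that this really is $\tau_{S}(y)(r)$, and not a $\rhd$-shifted variant of it, is where the self-distributivity of $\rhd$ and the compatibility axiom for the rack action are used.

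The step I expect to be the main obstacle is the $S$-equivariance part of $(a)\Rightarrow(b)$ and its dual use in $(b)\Rightarrow(a)$: over an arbitrary rack a cellular automaton is \emph{not} equivariant, so the purely formal group-style argument is unavailable, and one must exploit that $S$ lies inside $Eq(\tau)$ and inside $Stab(x)$ simultaneously — the first making translation by $S$ compatible with $\tau_{S}$, the second collapsing the $s\rhd r$ / $s\rhd^{-1}r$ twisting that would otherwise prevent the two descriptions from matching. Once this interplay is pinned down, both implications reduce to bookkeeping with $r\cdot y=y\circ\phi_{r}^{-1}$ and $\phi_{r_{1}\rhd r_{2}}=\phi_{r_{1}}\circ\phi_{r_{2}}\circ\phi_{r_{1}}^{-1}$.
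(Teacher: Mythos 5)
Your plan treats this as a rack analogue of the group-theoretic Curtis--Hedlund equivalence, to be proved at an \emph{arbitrary} configuration $y\in A^{S}$; but the statement, and the paper's proof, live entirely at the single distinguished configuration $x$ that defines $S=Stab(x,Eq(\tau))$. Since every $r\in S$ satisfies $r\cdot x=x$ (and, being in $Eq(\tau)$, also fixes $\tau(x)$), the defining formula $\tau_S(x)(r)=\mu((r\cdot x)|_{M})$ and the point condition $\tau_S(x)(r)=\mu(x|_{M})$ are literally the same assertion, and the equivariance check collapses at $x$: the paper's $(a)\Rightarrow(b)$ is the one-line chain $\tau_S(r\cdot x)(r')=\mu(r'\cdot(r\cdot x)|_{M})=\mu(r'\cdot x|_{M})=\tau_S(x)(r')=(r\cdot\tau_S(x))(r')$, and its $(b)\Rightarrow(a)$ is just $\mu(x|_{M})=\mu((r\cdot x)|_{M})$. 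There is no role in it for the general-configuration transport you describe.

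Concretely, both of your implications have gaps. In $(a)\Rightarrow(b)$ you propose to reduce $\mu\bigl((r\cdot(s\cdot y))|_{M}\bigr)$ to $\mu\bigl(((s\rhd^{-1}r)\cdot y)|_{M}\bigr)$ using the compatibility axiom plus ``$s\in Eq(\tau)$''. The compatibility axiom gives $r\cdot(s\cdot y)=(r\rhd s)\cdot(r\cdot y)$, which is not $(s\rhd^{-1}r)\cdot y$ for a general $y$; and invoking $s\in Eq(\tau)$ is invoking the commutation $\tau(s\cdot y)=s\cdot\tau(y)$ that you are supposed to be deriving (read for $\tau_S$ it is circular; read for $\tau$ on $A^{R}$ it makes hypothesis $(a)$ irrelevant and still leaves the passage to $\tau_S$ unjustified). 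Indeed the paper's Proposition 3.6 shows a cellular automaton on a rack satisfies $\tau(r\cdot y)(s)=\tau(s\cdot y)(s\rhd r)$ and is \emph{not} equivariant in general, so no bookkeeping with $\phi_{r_{1}\rhd r_{2}}=\phi_{r_{1}}\circ\phi_{r_{2}}\circ\phi_{r_{1}}^{-1}$ can extract equivariance at arbitrary $y$ from $(a)$ alone; it only works where the shift acts trivially, i.e.\ at $x$. In $(b)\Rightarrow(a)$ your transport step writes $y=r\cdot y'$ and then must apply the point condition to the configuration $y'$; but $(b)$ supplies the point condition only at the distinguished $x$ (the only configuration $S$ is known to stabilize), so the evaluation cannot be moved to $y'$, and for a quandle the shift you gain, $\tau_S(y')(r\rhd^{-1}r)=\tau_S(y')(r)$, does not produce $\mu((r\cdot y)|_{M})$. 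The step that actually closes the converse is the paper's trivial one: for $r\in S$, $r\cdot x=x$, so $\mu(x|_{M})=\mu((r\cdot x)|_{M})$ and the point condition at $x$ already is the cellular-automaton formula there.
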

\begin{proof}
	Suppose (a). Then for all $r, r^{\prime}\in S$, we have $\tau_S(r\cdot x)(r^{\prime})=\mu(r^{\prime}\cdot(r\cdot x)|_{M})=\mu(r^{\prime}\cdot x|_{M})=\tau_S(x)(r^{\prime})=(r\cdot\tau_S(x))(r^{\prime})$.
Hence $\tau_S(r\cdot x)=r\cdot\tau_S(x)$ for all $r\in S$, and therefore, $\tau_S$ is $S$-equivariant. Next, since $r\cdot x=x$ for all $r\in S$, $\tau_S(x)(r)=\mu(r\cdot x|_{M})=\mu(x|_{M})$.
\paragraph{} Conversely, suppose (b). Then, by using the definition of $S=Stab(x, Eq(\tau))$, we get
	$\tau_S(x)(r)=\mu(x|_{M})
	=\mu(r\cdot x|_{M})$
for all $x\in A^{S}$ and $r\in S$. Consequently,  $\tau _S $ satisfies (a).
\end{proof}
\paragraph{} Since $S=Stab(x, Eq(\tau))=R$ for a trivial rack $R$ and a map $\tau:A^{R}\rightarrow A^{R}$, the following corollary is straightforward.
\begin{cor}
Let $R$ be a trivial rack, $A$ be a set, and $\tau:A^{R}\rightarrow A^{R}$ be a map.  Let $M$ be a finite subset of $R$. Let $\mu:A^{M}\rightarrow A$ be a local defining map. Then following conditions are equivalent.\\
	\item{(a)} $\tau  $ is a cellular automata admitting $M$ as a memory set and $\mu$ as the associated local defining map:
	
	\item{(b)} $\tau$ is $R$-equivariant and $\tau(x)(r)=\mu(x|_{M})$ for all $r\in R$.
\end{cor}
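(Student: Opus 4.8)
The plan is to obtain this corollary as an immediate specialization of the preceding proposition, so the only genuine content is the observation that the shelf $S=Stab(x,Eq(\tau))$ equals the whole rack $R$ in the trivial case, together with an unwinding of notation.

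First I would record the two facts about a trivial rack that make everything collapse. Since $\phi_r=\mathrm{id}_R$ for all $r$, the shift action is trivial, i.e. $r\cdot x=x\circ\phi_r^{-1}=x$ for every $r\in R$ and every $x\in A^R$ (this is the example computed just after the proposition establishing the shift action). Hence on one hand $Stab(x)=R$ for all $x$, and on the other hand $Eq(\tau)=R$ for every map $\tau:A^R\to A^R$, because $\tau(r\cdot x)=\tau(x)=r\cdot\tau(x)$; both were already noted in the excerpt. Combining, $S=Stab(x,Eq(\tau))=Eq(\tau)\cap Stab(x)=R$ for every configuration $x$. In particular $A^S=A^R$, and the restricted map $\tau_S$ appearing in the proposition is literally $\tau$ itself, with no dependence on the choice of $x$.

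Next I would invoke the preceding proposition with $S=R$. Its clause (a), that $\tau_S$ is a cellular automaton on $A^S$ with memory set $M$ and local defining map $\mu$, becomes exactly clause (a) of the corollary. Its clause (b), that $\tau_S$ is $S$-equivariant and $\tau_S(x)(r)=\mu(x|_M)$ for all $r\in S$, becomes clause (b) of the corollary: $S$-equivariance with $S=R$ is $R$-equivariance in the sense of the definition of $S$-equivariant maps, and since $S=R$ holds for every $x$, running the proposition over all configurations simultaneously turns ``$\tau_S(x)(r)=\mu(x|_M)$ for all $r\in S$'' into ``$\tau(x)(r)=\mu(x|_M)$ for all $x\in A^R$ and $r\in R$''. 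The equivalence (a)$\iff$(b) of the proposition therefore transfers verbatim.

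I do not expect any real obstacle here. The only point requiring a moment's care is the identification $\tau_S=\tau$: one must check that when $S=R$ the restriction construction used in the proposition reproduces the original map on the original configuration space, which is immediate from $A^R=A^S$. If one prefers a self-contained argument, one can instead copy the short proof of the proposition with $R$ in place of $S$ throughout, replacing $\mu(r\cdot x|_M)$ by $\mu(x|_M)$ via $r\cdot x=x$; but factoring through the proposition is the cleaner route, which is exactly why the text can call the corollary ``straightforward''.
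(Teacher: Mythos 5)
Your proposal is correct and follows essentially the same route as the paper: the text derives the corollary by observing that for a trivial rack $S=Stab(x,Eq(\tau))=R$ (using the earlier facts that $r\cdot x=x$ forces $Stab(x)=R$ and $Eq(\tau)=R$), so that the preceding proposition specializes verbatim with $\tau_S=\tau$ on $A^S=A^R$. Your write-up merely spells out these identifications in more detail than the paper's one-line justification.
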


\paragraph{\textbf{Minimal Memory.}} Let $R$ be a rack and $A$ be a set. Let $\tau:A^{R}\rightarrow A^{R}$ be a cellular automaton and $S=Stab(x, Eq(\tau))$. Let $M$ be memory set for $\tau$ and let $\mu:A^{M}\rightarrow A$ be the associated local defining map. If $M^{\prime}$ is a finite subset of $R$ such that $M\subset M^{\prime}$, then $M^{\prime}$ is also a memory set for $\tau$ and the local defined map for $M^{\prime}$ is the map $\mu^{\prime}:A^{M^{\prime}}\rightarrow A$ given by $\mu^{\prime}=\mu\circ p$, where $ p:A^{M^{\prime}}\rightarrow A^{M}$ is the canonical restriction map. This shows that the memory set of a cellular automaton $\tau:A^{R}\rightarrow A^{R}$ is not unique in general. However, we shall see that every cellular automaton $ \tau_S $ admits a unique memory set of minimal cardinality. For that we first have the following result.

\begin{lem}
	Let $ \tau : A^{R}\longrightarrow A^{R} $ be a cellular automaton and $S=Stab(x, Eq(\tau))$. Let $ M_{1} $ and $ M_{2} $ be memory sets for $ \tau_S $. Then $ M_{1}\cap M_{2} $ is also a memory set for $ \tau_S $.

\end{lem}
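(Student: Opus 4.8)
The plan is to adapt the classical proof that the memory sets of a cellular automaton on a group are closed under intersection, the one genuinely new ingredient being that the shelf $S$ carries no distinguished ``identity'' element at which to evaluate, so a short preliminary step is needed to replace the evaluation at $1_G$. Throughout, let $\mu_i\colon A^{M_i}\to A$ be the local defining map of $\tau_S$ associated with the memory set $M_i$ ($i=1,2$), and put $M:=M_1\cap M_2$. The target is to produce a map $\mu\colon A^{M}\to A$ for which $M$ together with $\mu$ satisfies condition (b) of the characterisation proposition above: that $\tau_S$ is $S$-equivariant --- which we already know, since $\tau_S$ has a memory set at all --- and that $\tau_S(y)(r)=\mu\big((r\cdot y)|_{M}\big)$ for every $r\in S$ and every configuration $y$.

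First I would record the replacement for ``evaluation at the identity'': for every configuration $u$ one has $\mu_1(u|_{M_1})=\mu_2(u|_{M_2})$. Indeed, fix any $r\in S$ (the set $S$ is nonempty in all the cases of interest, e.g.\ $1_G\in S$ for a conjugation rack, and the statement is vacuous otherwise); since the shift $\varphi_r$ is a bijection of the configuration space, write $u=r\cdot w$ for a suitable $w$; then, because $M_1$ and $M_2$ are both memory sets for $\tau_S$, $\mu_1(u|_{M_1})=\mu_1\big((r\cdot w)|_{M_1}\big)=\tau_S(w)(r)=\mu_2\big((r\cdot w)|_{M_2}\big)=\mu_2(u|_{M_2})$.

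Next comes the heart of the argument: $\mu_1(u|_{M_1})$ depends only on the restriction of $u$ to $M$. Given configurations $u,u'$ with $u|_M=u'|_M$, I would splice them into a configuration $z$ that coincides with $u$ on $M_1$ and with $u'$ off $M_1$; since $u$ and $u'$ already agree on $M=M_1\cap M_2$, the configuration $z$ then coincides with $u$ on $M_1$ and with $u'$ on $M_2$. Applying the preliminary step twice, $\mu_1(u|_{M_1})=\mu_1(z|_{M_1})=\mu_2(z|_{M_2})=\mu_2(u'|_{M_2})=\mu_1(u'|_{M_1})$. Hence $p\mapsto\mu_1(\tilde p|_{M_1})$, where $\tilde p$ is any extension of $p\in A^{M}$, is a well-defined map $\mu\colon A^{M}\to A$. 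Finally, for $r\in S$ and any $y$, $\tau_S(y)(r)=\mu_1\big((r\cdot y)|_{M_1}\big)=\mu\big((r\cdot y)|_{M}\big)$ --- the first equality because $M_1$ is a memory set, the second by the definition of $\mu$ applied to the configuration $r\cdot y$ --- so, together with $S$-equivariance, condition (b) of the characterisation proposition holds and $M$ is a memory set for $\tau_S$.

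I expect the main obstacle to be the careful handling of the preliminary step $\mu_1(u|_{M_1})=\mu_2(u|_{M_2})$ and, relatedly, making sure the spliced configuration and the restrictions $(\cdot)|_{M_i}$ genuinely live in the configuration space on which $\tau_S$ operates (recalling that $\tau_S(y)(r)$ sees $y$ only through its values on $r\rhd^{-1}M_i$): on a group this difficulty is invisible because one simply evaluates everything at $1_G$, whereas here it rests on the bijectivity of the shift and on $S$ being nonempty. Once that is in place, the splicing and the concluding verification are routine.
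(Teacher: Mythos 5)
Your proof is correct and takes essentially the same route as the paper's: the same splicing of two configurations that agree on $M_{1}\cap M_{2}$ into a third configuration matching one of them on $M_{1}$ and the other on $M_{2}$, followed by an appeal to the memory-set/equivariance characterization (Proposition 3.13) to conclude. Your preliminary step $\mu_{1}(u|_{M_{1}})=\mu_{2}(u|_{M_{2}})$, obtained from the surjectivity of the shift by a fixed $r\in S$, is just a slightly more careful substitute for the paper's direct assertion that the value $\tau_S(\cdot)(r)$ depends only on the restriction to each memory set, so the core argument is the same.
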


\begin{proof}
	
Let $ x\in A^{S}$. We show that for all $r\in S$, $ \tau(x)(r)$ depends only on the restriction of $ x $ to $ M_{1}\cap M_{2} $. For this, consider an element $ y\in A^{S} $ such that $ x|_{M_{1}\cap M_{2}} = y|_{M_{1}\cap M_{2}} $. Let us take an element $ z\in A^{S} $ such that $ z|_{M_{1}} = x|_{M_{1}} $ and $ z|_{M_{2}} = y|_{M_{2}} $ (we may take for instance the configuration $ z\in A^{S}$ which concide with $x$ on $M_1$ and with $y$ on $S\setminus M_1$). We have $ \tau_S(x)(r) = \tau_S(z)(r) $ since $ x $ and $ z $ coincide on $ M_{1} $, which is a memory set for $ \tau_S $. On the other hand, we have $ \tau_S(y)(r) = \tau_S(z)(r) $
since $ y $ and $ z $ coincide on $ M_{2} $, which is also a memory set for $ \tau_S $ . It follows that $ \tau_S(x)(r) = \tau_S(y)(r) $. Thus there exists a map $ \mu: A^{M_{1}\cap M_{2}}\longrightarrow A $ such that	
		\begin{center}
			$ \tau_S(x)(r) = \mu(x|_{M_{1}\cap M_{2}}) $ for all $ x\in A^{S} $.
		\end{center}
As $ \tau_S $ is $ S $-equivariant (by using Proposition 3.12.), we deduce that $ M_{1}\cap M_{2} $ is a memory set for $ \tau_S $ by using Proposition 3.13..

\end{proof}
\begin{prop}
	Let $ \tau : A^{R}\longrightarrow A^{R} $ be a cellular automaton and $S=Stab(x, Eq(\tau))$. Then there exists a unique memory set $ M_{0}\subset R $ for $ \tau_S $ of minimal cardinality. Moreover, if $ M $ is a finite subset of $ R $, then $ M $ is a memory set for $ \tau $ if and only if	$ M_{0}\subset M $.
\end{prop}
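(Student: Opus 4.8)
The plan is to reproduce the classical minimal‑memory argument for cellular automata, the only essential input being the preceding lemma (intersection of memory sets is a memory set) together with the $S$-equivariance of $\tau_S$ provided by Proposition 3.12. First I would record that the collection $\mathcal{M}$ of all memory sets for $\tau_S$ is non-empty: since $\tau$ is a cellular automaton it admits a finite memory set $M$ with local defining map $\mu$, and restricting configurations to $A^S$ shows that the same $M$ (with $\mu$ suitably interpreted) is a memory set for $\tau_S$. Because every element of $\mathcal{M}$ is a finite subset of $R$, the set $\{\,|N| : N\in\mathcal{M}\,\}$ is a non-empty set of non-negative integers and therefore has a least element; choose $M_0\in\mathcal{M}$ attaining it.

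For uniqueness, suppose $M_0'\in\mathcal{M}$ also has $|M_0'|=|M_0|$ minimal. By the preceding lemma, $M_0\cap M_0'\in\mathcal{M}$, so $|M_0\cap M_0'|\geq |M_0|$ by minimality; on the other hand $M_0\cap M_0'\subset M_0$ forces $|M_0\cap M_0'|\leq |M_0|$, with equality only if $M_0\cap M_0'=M_0$, and symmetrically only if $M_0\cap M_0'=M_0'$. Hence $M_0=M_0'$, so $M_0$ is the unique memory set for $\tau_S$ of minimal cardinality.

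For the ``moreover'' clause: if $M$ is a finite subset of $R$ with $M_0\subset M$, then $M$ is a memory set for $\tau_S$ by the observation made in the Minimal Memory paragraph, namely that a finite superset of a memory set is again a memory set (with local defining map $\mu_0\circ p$, where $p:A^{M}\rightarrow A^{M_0}$ is the canonical restriction). Conversely, if $M$ is a memory set for $\tau$, hence for $\tau_S$, then $M_0\cap M\in\mathcal{M}$ by the preceding lemma, and minimality of $|M_0|$ gives $|M_0\cap M|=|M_0|$, i.e. $M_0\cap M=M_0$, i.e. $M_0\subset M$. This establishes the equivalence.

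I do not expect a genuine obstacle here; the delicate points are purely organisational. One must be careful to pass from a memory set of $\tau$ to one of $\tau_S$ before invoking the lemma (so that $\mathcal{M}\neq\emptyset$), and to note that the lemma's hypothesis — that $\tau_S$ be $S$-equivariant — is exactly what Proposition 3.12 supplies. Unlike the group setting, the rack $R$ itself contributes nothing beyond furnishing the ambient finite subsets, so the argument is formally identical to the group case once Lemma and Proposition 3.12 are in hand.
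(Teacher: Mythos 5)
Your proposal is correct and follows essentially the same route as the paper: uniqueness and the ``only if'' direction both come from the intersection lemma together with minimality of $|M_0|$, and the ``if'' direction from the observation that a finite superset of a memory set is again a memory set. The only difference is that you spell out the existence of a minimal memory set (non-emptiness of the family of memory sets for $\tau_S$ and well-ordering of the cardinalities), which the paper leaves implicit; this is a harmless and slightly more careful bookkeeping step, not a different argument.
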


\begin{proof}
	Let $ M_{0} $ be a memory set for $ \tau_S $ of minimal cardinality. Let $ M^{\prime}_{0} $ be another memory set for $ \tau_S $ of minimal cardinality. Then by using Lemma 3.15. $ M_{0}\cap M^{\prime}_{0} $ is also a memory set for $ \tau_S $. But $ M_{0}\cap M^{\prime}_{0}\subset M_{0} $ and $ M_{0}\cap M^{\prime}_{0}\subset M_{0}^{\prime} $ which is a contradiction to the minimality of $ M_{0} $ and $ M_{0}^{\prime} $. Hence $ M_{0} $ must be equal to $ M_{0}^{\prime} $, that is, $M_{\circ}$ is a unique memory set for  $ \tau_S $ of minimal cardinality.
	
	Next, suppose that $ M $ is a finite subset of a rack $ R $ and $ M_{0}\subset M $.  Then $ M $ is also a memory set for $\tau_S $. Conversely, let $ M $ be a memory set for $ \tau_S $. As $ M_{0}\cap M $ is a memory set for $ \tau_S $ by Lemma 3.15., we have $ |M\cap M_{0}|\geq|M_{0}| $. This implies that $ M\cap M_{0} = M_{0} $, that is, $ M_{0}\subset M $. In particular, $ M_{0} $ is the unique memory set of minimal cardinality.
\end{proof}

The memory set of minimal cardinality of a cellular automaton  $ \tau_S $ is called its \textit{minimal memory set}.

\begin{rem}
Let $ \tau : A^{R}\longrightarrow A^{R} $ be a cellular automaton and $S=Stab(x, Eq(\tau))$. A map $ F : A^{R}\longrightarrow A^{R} $ is constant if there exists a configuration $x_0 \in A^R$ such that $F(x) = x_0$ for all $x_0 \in A^R$. By $S$-equivariance, a cellular automaton $ \tau_S : A^{R}\longrightarrow A^{R} $ is constant if and only if there exists $a\in A$ such that $\tau_S(r) = a$ for all $x \in A^S$ and $r\in S$. Observe that a cellular automaton
$ \tau_S : A^{R}\longrightarrow A^{R} $ is constant if and only if its minimal memory set is the empty set.
	
\end{rem}

\begin{center}
	\section{\textsc{Continuity of Cellular Automata}}	
\end{center}
\paragraph{} In this section we discuss the continuity of cellular automata on the configuration space $A^{R}$ for a rack $R$ and alphabet $A$. For this we first define the prodiscrete  topology on the configuration space $A^R$.
\paragraph{\textbf{The Prodiscrete Topology.}} Let $A^R$ be the configuration space on a rack $ R $ and the alphabet $ A $. Consider the discrete topology on each factor $ A $ of $ A^{R} $. Then the discrete topology on $ A^{R} $ can be considered as a  product topology of the discrete topologies on the factors of $A^{R}$. This topology is called the \textit{prodiscrete topology} on $ A^{R} $. This is the smallest topology on $ A^{R} $ for which the projection map $ \pi_{r} : A^{R}\longrightarrow A $, given by $ \pi_{r}(x) = x(r) $, is continuous for every $ r \in R $. The elementary cylinders
\begin{center}
	$ C(r,a) = \pi^{-1}_{r}(\{a\}) = \{x \in  A^{R} : x(r) = a\} ~~~(r \in R, a \in A) $
\end{center}
are both open and closed in $ A^{R} $. The set of all elementary cylinders $ C(r,a) $ forms a subbase for the prodiscrete topology on $ A^{R} $, that is, a subset $ U \subset A^{R} $ is open if and only if $ U $ can be expressed as a (finite or infinite) union of finite intersections of elementary cylinders $ C(r,a) $ for all $r\in R$ and $a\in A$.

For a finite subset $ \Omega\subset R $ and a configuration $ x\in A^{R}$ let $ x|_{\Omega}\in A^{\Omega} $ denote the restriction of $ x $ to $ \Omega $, that is, the map $ x|_{\Omega}:\Omega \longrightarrow A $ defined by $ x|_{\Omega}(r) = x(r) $
for all $ r \in \Omega. $ Then a neighborhood base of $ x $ is given by the sets
\[
V (x,\Omega) = \{y\in A^{R} : x|_{\Omega} = y|_{\Omega} \}=  \bigcap_{r\in  \Omega} C(r, x(r)),\]
where  $ \Omega $
runs through all finite subsets of $ R $.\\

Note that an action $\cdot$ of a rack $R$ on a topological space $X$ is said to br \textit{continuous} if the map $\varphi_r: X\longrightarrow X$ given by $\varphi_r(x)=r\cdot x$ is continuous on $X$ for all $r\in R$ and $x\in A^R$.

\begin{prop}
	Let $A^{R}$ be a configuration space on a rack $R$ and the set $A$. Then the shift action $\cdot: R\times A^R \longrightarrow A^r$, defined by $r\cdot x:=x\circ \phi^{-1}_r$, is continuous.
\end{prop}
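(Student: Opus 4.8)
The plan is to invoke the universal property of the prodiscrete topology. By construction it is the initial (coarsest) topology on $A^{R}$ making every projection $\pi_{s}:A^{R}\longrightarrow A$, $\pi_{s}(x)=x(s)$, continuous, so a map $f:A^{R}\longrightarrow A^{R}$ is continuous precisely when $\pi_{s}\circ f$ is continuous for every $s\in R$. By the paper's definition of a continuous action, it therefore suffices to fix $r\in R$ and show that $\pi_{s}\circ\varphi_{r}$ is continuous for all $s\in R$, where $\varphi_{r}(x)=r\cdot x=x\circ\phi_{r}^{-1}$.

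The key step is a one-line computation. For $x\in A^{R}$ and $s\in R$,
\[
(\pi_{s}\circ\varphi_{r})(x)=\varphi_{r}(x)(s)=(x\circ\phi_{r}^{-1})(s)=x(\phi_{r}^{-1}(s))=x(r\rhd^{-1}s)=\pi_{r\rhd^{-1}s}(x),
\]
so $\pi_{s}\circ\varphi_{r}=\pi_{r\rhd^{-1}s}$, which is continuous by the very definition of the prodiscrete topology. Hence $\varphi_{r}$ is continuous, and since $r$ was arbitrary the shift action is continuous. Equivalently, phrasing the same argument with subbasic opens, one checks $\varphi_{r}^{-1}(C(s,a))=\{x\in A^{R}:x(r\rhd^{-1}s)=a\}=C(r\rhd^{-1}s,a)$, which is (cl)open; since the cylinders $C(s,a)$ form a subbase, this again gives continuity of $\varphi_{r}$.

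There is no genuine obstacle here; the only points requiring care are the bookkeeping identity $\phi_{r}^{-1}(s)=r\rhd^{-1}s$ (the definition of the second rack operation) and the observation that the paper's notion of continuity only asks for continuity of each $\varphi_{r}$ separately, not joint continuity in $(r,x)$ — the latter would require a topology on $R$ and is not what is being claimed. If desired, one may additionally record that each $\varphi_{r}$ is a homeomorphism: it is a bijection with inverse $\varphi_{r}^{-1}(x)=x\circ\phi_{r}$ by Proposition~2.15, and the identical computation yields $\pi_{s}\circ\varphi_{r}^{-1}=\pi_{r\rhd s}$, so $\varphi_{r}^{-1}$ is continuous as well.
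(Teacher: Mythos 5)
Your proof is correct and follows essentially the same route as the paper: both compute $\pi_{s}\circ\varphi_{r}=\pi_{r\rhd^{-1}s}$ and conclude continuity of $\varphi_{r}$ from the universal (initial) property of the prodiscrete product topology; your version merely states that property more explicitly and adds the optional subbase and homeomorphism remarks. No gaps.
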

\begin{proof}
	Let $\varphi_{r}:A^{R}\rightarrow A^{R}$ be a function defined as $ \varphi_{r}(x)=r \cdot x $ for all $r\in R$ and $x\in A^{R}$. Consider the composition map $\pi_{s}\circ \varphi_{r}:A^{R}\rightarrow A $, where $\pi_{s}$ is the continuous projection map $ \pi_{r} : A^{R}\longrightarrow A $, given by $ \pi_{s}(x) = x(s) $ for all $s\in R$. Then $\pi_{s}\circ \varphi_{r}=\pi_{r\rhd^{-1}s}$ since for all $x\in A^{R}$,
	$\pi_{s}\circ \varphi_{r}(x) =\pi_{s}(r\cdot x)=(r\cdot x)(s)=(x\circ \phi_{r}^{-1})(s)=x(\phi_{r}^{-1}(s))=x(r\rhd^{-1}s)=\pi_{r\rhd^{-1}s}(x)$. Since $\pi_{r\rhd^{-1}s}$ is continuous, $\pi_{s}\circ \varphi_{r}$ is continuous, which is possible only when $\varphi_{r}$ is continuous. Hence the action of rack $R$ on $A^{R}$ is continuous.
\end{proof}

\begin{prop}
	Let $R$ be a rack and $A$ be a set. Then every cellular automaton $\tau:A^{R}\rightarrow A^{R}$ is continuous.
\end{prop}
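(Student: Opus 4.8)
The plan is to reduce continuity of $\tau$ to continuity of its coordinate functions and then invoke the finite-dependency property of a cellular automaton. Recall that the prodiscrete topology on the target $A^{R}$ is precisely the initial (product) topology determined by the projections $\pi_{r}:A^{R}\to A$, $\pi_{r}(x)=x(r)$; hence a map into $A^{R}$ is continuous if and only if its composition with each $\pi_{r}$ is continuous. Thus it suffices to prove that $\pi_{r}\circ\tau:A^{R}\to A$ is continuous for every $r\in R$.

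First I would fix a memory set $M\subset R$ for $\tau$ together with an associated local defining map $\mu:A^{M}\to A$, so that $\tau(x)(r)=\mu(r\cdot x|_{M})$ for all $x\in A^{R}$ and $r\in R$. By Remark 3.2(c), the value $\tau(x)(r)$ depends only on the restriction of $x$ to the set $\Omega_{r}:=r\rhd^{-1}M=\{r\rhd^{-1}m:m\in M\}=\phi_{r}^{-1}(M)$. Since $\phi_{r}$ is a bijection of $R$, the set $\Omega_{r}$ is finite with $|\Omega_{r}|=|M|$. Consequently, whenever $x,y\in A^{R}$ satisfy $x|_{\Omega_{r}}=y|_{\Omega_{r}}$, we have $\tau(x)(r)=\tau(y)(r)$.

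Next I would check directly that $\pi_{r}\circ\tau$ is continuous by showing that the preimage of each point is open. Since $A$ carries the discrete topology, it is enough to show that for every $a\in A$ the set $W_{a}:=(\pi_{r}\circ\tau)^{-1}(\{a\})=\{x\in A^{R}:\tau(x)(r)=a\}$ is open in $A^{R}$. If $x\in W_{a}$, then by the previous paragraph every $y\in A^{R}$ with $y|_{\Omega_{r}}=x|_{\Omega_{r}}$ also lies in $W_{a}$; that is, the basic neighbourhood $V(x,\Omega_{r})$ of $x$ is contained in $W_{a}$. Hence $W_{a}=\bigcup_{x\in W_{a}}V(x,\Omega_{r})$ is a union of basic open sets and is therefore open. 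This establishes continuity of $\pi_{r}\circ\tau$ for every $r\in R$, and hence continuity of $\tau$.

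This argument has essentially no hard step: the only substantive input is the finite-dependency observation recorded in Remark 3.2(c) — which itself rests on $\phi_{r}$ being a bijection, so that $\Omega_{r}$ remains finite — and the rest is the standard criterion for continuity of a map into a product space equipped with the product topology. An equivalent and perhaps slightly cleaner formulation would be to note that $\pi_{r}\circ\tau$ factors as $A^{R}\xrightarrow{\;\rho\;}A^{\Omega_{r}}\to A$, where $\rho$ is the (continuous) restriction map and $A^{\Omega_{r}}$ is discrete because $\Omega_{r}$ is finite; continuity of $\pi_{r}\circ\tau$ then follows immediately from continuity of $\rho$.
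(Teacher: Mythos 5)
Your proof is correct, and it takes a somewhat different route from the paper's. The paper argues pointwise with neighbourhood bases: given a configuration $x$ and a basic neighbourhood $V(\tau(x),\Omega)$ of $\tau(x)$, it forms the finite set $\Omega\rhd^{-1}M$ (collecting the dependency sets of all coordinates in $\Omega$ at once) and checks directly that $\tau\bigl(V(x,\Omega\rhd^{-1}M)\bigr)\subset V(\tau(x),\Omega)$. You instead invoke the universal property of the product (initial) topology, reducing continuity of $\tau$ to continuity of each coordinate map $\pi_{r}\circ\tau$, and then use the same finite-dependency fact (Remark 3.2(c)) coordinatewise: $\pi_{r}\circ\tau$ is constant on each cylinder $V(x,r\rhd^{-1}M)$, so preimages of points of the discrete alphabet are open; equivalently, it factors through the discrete finite power $A^{r\rhd^{-1}M}$. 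Both arguments rest on exactly the same substantive input, namely that $\tau(x)(r)$ depends only on $x|_{r\rhd^{-1}M}$ with $r\rhd^{-1}M$ finite; your version is more modular, treating one coordinate at a time and never juggling an arbitrary finite $\Omega$, while the paper's version explicitly exhibits, for each point and each basic target neighbourhood, the domain neighbourhood witnessing continuity (and, like yours, uses neither equivariance nor finiteness of $A$). One could also note that your factorization remark quietly uses that a finite product of discrete spaces is discrete, which is exactly why $r\rhd^{-1}M$ being finite matters.
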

\begin{proof}
	Let $M\subset R$ be a memory set for cellular automaton $\tau:A^{R}\rightarrow A^{R}$. Let $x\in A^{R}$ and let $W$ be a neighbourhood of the configuration $\tau(x)$ in $A^{R}$. Then we can find a finite subset $\Omega \subset R$ such that
	\begin{equation*}
	V(\tau(x),\Omega)\subset W.
	\end{equation*}
	Consider the finite subset $ \Omega \rhd^{-1} M = \{r\rhd^{-1} m: r\in R, m\in M\} $ of $R$. Now consider a set $\tau (V(x,\Omega \rhd^{-1}M))$. Let $ \tau(z)\in \tau (V(x,\Omega \rhd^{-1}M)) $, where $z\in V(x,\Omega \rhd^{-1}M)$. Then $ z|_{\Omega \rhd^{-1}M}=x|_{\Omega \rhd^{-1}M} $. Then $ \tau(z)|_{\Omega}=\tau(x)|_{\Omega} $ by Proposition 3.0.15.
	This shows that $\tau(z)\in V(\tau(x),\Omega)$, and therefore
	\begin{equation*}
	\tau (V(x,\Omega \rhd^{-1}M))\subset V(\tau(x),\Omega)\subset W.
	\end{equation*}
	This shows that $\tau$ is continuous.
\end{proof}
\paragraph{} Next we prove an analogue of Curtis-Hedlund's Theorem for cellular automata on a rack $R$ with finite alphabet $A$. Note that for a finite alphabet $A$, the prodiscrete topology on $A^R$ is the  product of finite discrete topological spaces, and therefore, it is compact, that is, every open cover of $A^R$ has a finite subcover.
\begin{thm}
	Let $R$ be a rack and $A$ be a finite set. Let $\tau:A^{R}\rightarrow A^{R}$ be a map and $S=Stab(x, Eq(\tau))$. Equip $A^{S}$ with its prodiscrete topology. Then the following conditions are equivalent:
	\item{(a)}~~the map $\tau_S:A^{S}\rightarrow A^{S}$ is a cellular automaton;
	\item{(b)}~~the map $\tau_S$ is $S$-equivariant and continuous.
\end{thm}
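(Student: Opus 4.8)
The strategy is to adapt the classical Curtis--Hedlund argument to the present setting: the shelf $S=Stab(x,Eq(\tau))$ plays the role of the group, and the characterization of cellular automata on $S$ proved above plays the role that $G$-equivariance plays in the group case. Throughout, recall that since $A$ is finite, $A^{S}$ is compact in the prodiscrete topology, being a product of finite discrete spaces.

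\emph{The implication $(a)\Rightarrow(b)$ is routine.} If $\tau_S$ is a cellular automaton on $S$ with memory set $M$ and local defining map $\mu$, then it is continuous: the proof that every cellular automaton on a rack is continuous uses only the finiteness of a memory set together with the fact (Remark~3.2(c)) that $\tau_S(y)(r)$ depends on $y$ through finitely many coordinates, and it carries over verbatim, giving, for each finite $\Omega\subset S$, a finite $\Omega'\subset R$ with $\tau_S\big(V(y,\Omega')\big)\subseteq V\big(\tau_S(y),\Omega\big)$. Moreover, a cellular automaton $\tau_S$ is $S$-equivariant by the characterization of cellular automata on $S$ proved above. Hence $(b)$ holds.

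\emph{The implication $(b)\Rightarrow(a)$ is the substantive one.} Assume $\tau_S$ is $S$-equivariant and continuous. By the same characterization, it is enough to produce a finite $M\subset R$ and a map $\mu:A^{M}\to A$ computing $\tau_S$ on $M$, the $S$-equivariance demanded there being already in hand. I would argue in three steps. \emph{(i) Local finiteness.} Fix a base coordinate $r_0\in S$. For each $y\in A^{S}$, continuity of $\tau_S$ at $y$ and openness of the elementary cylinder $C(r_0,\tau_S(y)(r_0))$ yield a finite $\Omega_y\subset S$ with $\tau_S\big(V(y,\Omega_y)\big)\subseteq C\big(r_0,\tau_S(y)(r_0)\big)$, i.e.\ the value $\tau_S(y)(r_0)$ is determined by $y|_{\Omega_y}$. \emph{(ii) Uniformization.} The cylinders $V(y,\Omega_y)$ cover $A^{S}$; by compactness finitely many of them, attached to configurations $y_1,\dots,y_n$, already cover $A^{S}$. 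Set $M=\bigcup_{i=1}^{n}\Omega_{y_i}\subset R$. A configuration-splicing argument, identical to the one proving that an intersection of two memory sets is again a memory set, shows that $\tau_S(y)(r_0)$ depends only on $y|_{M}$, so $\mu(p):=\tau_S(y)(r_0)$ (for any $y$ with $y|_M=p$) is well defined. \emph{(iii) Spreading the rule.} Using $S$-equivariance together with the identity $r\cdot\tau(y)(s)=\tau(y)(r\rhd^{-1}s)$ from Proposition~3.5 and the fact that every element of $S$ stabilizes $x$, one transfers this local rule from $r_0$ to all coordinates $r\in S$, obtaining exactly condition (b) of the characterization; that result then identifies $\tau_S$ as a cellular automaton with memory set $M$ and local defining map $\mu$.

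\textbf{Main obstacle.} The direction $(a)\Rightarrow(b)$ is pure bookkeeping. The crux is step (ii): converting the pointwise continuity data $\{\Omega_y\}$ into a single finite memory set needs compactness of $A^{S}$, which is precisely where the finiteness of $A$ is used in an essential way (the statement already fails for groups without it). A secondary, setting-specific point --- with no counterpart for groups --- is that $S$ is only a shelf, so it may lack a neutral element and need not be closed under $\rhd^{-1}$; hence in step (iii) one cannot simply ``translate by $r_0^{-1}$'' and must instead exploit that $S=Stab(x,Eq(\tau))$ consists of elements that stabilize $x$ and also lie in $Eq(\tau)$ --- this is where Proposition~3.5 and the characterization of cellular automata on $S$ do the work.
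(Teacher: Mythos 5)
Your overall strategy coincides with the paper's: (a)$\Rightarrow$(b) is obtained from the characterization of cellular automata on $S$ (the proposition asserting the equivalence of being a cellular automaton with being $S$-equivariant and satisfying $\tau_S(y)(r)=\mu(y|_{M})$) together with the continuity of cellular automata, and (b)$\Rightarrow$(a) proceeds by using continuity to get, for each configuration, a finite set of input coordinates determining the output, then compactness of $A^{S}$ (this is where finiteness of $A$ enters, exactly as you say), then a splicing argument to see that $\mu$ is well defined, and finally the characterization to conclude. Your steps (i) and (ii) are precisely the paper's argument.

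The divergence, and the gap, is your step (iii). The characterization you want to invoke requires $\tau_S(y)(r)=\mu(y|_{M})$ for \emph{every} $r\in S$, with a right-hand side independent of $r$; your steps (i)--(ii) only give this at the fixed base coordinate $r_0$, and the ``spreading'' you propose does not follow from the tools you cite. Since $S$ is merely a shelf --- no identity, no inverses, and no reason to be a single orbit under its own action --- $S$-equivariance combined with Proposition 3.5 and the fact that elements of $S$ stabilize $x$ only yields identities of the form $\tau_S(y)(r\rhd^{-1}s)=\tau_S(y)(s)$, i.e. constancy of $\tau_S(y)$ along the orbit of $r_0$ under the maps $\phi_r^{-1}$ with $r\in S$; an arbitrary coordinate $r\in S$ need not lie in that orbit, so there is no analogue of translating by $g^{-1}$ as in the group case, and you do not reach $\tau_S(y)(r)=\mu(y|_{M})$ for all $r\in S$. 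The paper handles this point differently: it never fixes a base coordinate, but instead takes the finite set $\Omega_x$ supplied by continuity to control the value $\tau_S(y)(r)$ for all $r\in S$ \emph{simultaneously}, consistent with its convention (built into the characterization) that the value of a cellular automaton on $S=Stab(x,Eq(\tau))$ is the same $\mu(y|_{M})$ at every $r\in S$. So to complete your plan you must either prove directly from (b) that $\tau_S(y)(r)$ is independent of $r$ on $S$ (which the identities you cite do not give), or run the continuity/compactness argument with the $r$-uniform statement from the outset, as the paper does.
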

\begin{proof}
	The fact that (a) implies (b) directly follows from Proposition 3.12. and Proposition 4.2..
	
	\ Conversly suppose (b). We show that $\tau_S$ is a cellular automaton. As the map $\phi_S:A^{S}\rightarrow A$, defined by $\phi_S(x)=\tau_S(x)(r)$ for all $x\in A^{R}~r\in S$, is continuous, we can find, for each $x\in A^{S}$, a finite subset $\Omega_{x} \subset S$ such that if $y\in A^S$ coincide with $x\in A^{S}$ on $\Omega_{x}$, that is, if $y\in V(x,\Omega_{x})$ such that $y|_{\Omega_{x}}=x|_{\Omega_{x}}$, then $\tau_S(y)(r)=\tau_S(x)(r)$ for all $r\in S$. The sets $V(x,\Omega_{x})$ form an open cover of $A^{S}$. As $A^{S}$ is compact, there is a finite subset $F\subset A^{S}$ such that the sets $V(x,\Omega_{x})$, $x\in F$, cover $A^{S}$. Let $M=\cup_{x\in F}\Omega_{x}$ and suppose that $y,z\in A^{S}$ such that $y|_{M}=z|_{M}$. Let $x_0\in F$ be such that $ y\in V(x_0,\Omega_{x_0}) $, that is, $y|_{\Omega_{x_0}}=x_0|_{\Omega_{x_0}}$. As $M \supset \Omega_{x_0}$ we have $y|_{\Omega_{x_0}}=z|_{\Omega_{x_0}}$ and therefore $\tau_S(y)(r)=\tau_S(t)(r)=\tau_S(z)(r)$. Thus there is map $\mu:A^{M}\rightarrow A$ such that $\tau_S(x)(r)=\mu(x|_{M})$ for all $x\in A^S$. As $\tau_S$ is $S$-equivariant, it follows from Proposition 3.13. that $\tau_S$ is cellular automaton with memory set $M$ and local defining map $\mu$.
\end{proof}

\ When the alphabet $A$ is infinite, a continuous and $S-$equivariant map $\tau_S:A^{S}\rightarrow A^{S}$ in Theorem 5.3. may fail to be a cellular automaton. In other words, the implication (b) ⇒ (a) in Theorem 15.3. becomes false if we suppress the finiteness hypothesis on $A$. This is shown by the following example.
\begin{exa}
	Let $G$ be an arbitrary infinite group and take $A=G$ as the alphabet. Consider $G$ as a conjugation rack with $\rhd$ and $\rhd^{-1}$ defined by $r\rhd s=rsr^{-1}$ and $r\rhd^{-1}s=r^{-1}sr$ for all $r,s \in G$. Consider the map $\tau:A^{G}\rightarrow A^{G}$ defined by
	\begin{center}
		$ \tau(x)(r)=x(r\rhd x(r))$
	\end{center}
	for all $x\in A^R$ and $r\in G$. Let $S=Stab(x, Eq(\tau))\subset G$. Note that $S$ is a non-empty set because $1_G\in S$. Then $\tau_S:A^{S}\rightarrow A^{S}$ defined by $ \tau(x)(r)=x(r\rhd x(r))$, for all $s \in S$, is $S$-equivariant. Next, we prove that $\tau_S$ is continuous by showing that for a given $x\in A^{S}$ and a finite subset $K\subset S$, there exists a finite set $F\subset S$ such that, if $y\in A^{S}$ and $y\in V(x, F)$, then $\tau(y)\in V(\tau(x), K)$. For this set $F=K \cup \{a\rhd x(k): k\in K\}$. Then, if $y\in V(x,F)$, then, for all $k\in K$, we have
	\begin{center}
		$ \tau_S(x)(k)=x(k\rhd x(k))=y(k\rhd x(k))=y(k\rhd y(k))=\tau(y)(k) $.
	\end{center}
	This shows that $\tau_S(y)\in V(\tau(x), K)$, and therefore, $\tau_S$ is continuous.
	\ However, $\tau_S$ is not a cellular automata. Indeed,  $r_{0}\in S\setminus \{1_{G}\}$ and, for all $r\in R$, consider the configurations $x_{r}$ and $y_{r}$ in $A^{S}$ defined by
	\begin{center}
		$	x_{r}(s)  =
		\begin{cases}
		r ~~~~~~~ \text{if}~~
		s=1_{G} \\
		r_{o} ~~~~~~\text{if}~~
		s=r\\
		1_{G}~~~~~\text{if}~~\text{otherwise}
		\end{cases}$
	\end{center}
	and
	
	\begin{center}
		$	y_{r}(s)  =
		\begin{cases}
		r~~~~~~~ \text{if}~~
		s=1_{G}\\
		1_{G}~~~~~\text{if}~~
		\text{otherwise}\\
		\end{cases}$
	\end{center}
	for all $s\in S$. Note that $x_{r}|_{S\setminus \{r\}}=y_{r}|_{S\setminus \{r\}}$. Let $F\subset S$ be a finite set and choose $r\in S\setminus F$. Then one has $x_{r}|_{F}=y_{r}|_{F}$.
	While
	\begin{center}
		$\tau(x_{r}(1_{G}))=x_{r}(x_{r}(1_{G}))=x_{r}(r)=r_{o}$
	\end{center}
	and
	\begin{center}
		$\tau(y_{r}(1_{G}))=y_{r}(y_{r}(1_{G}))=y_{r}(r)=1_{G}$.
	\end{center}
	so that $\tau(x_{r}(1_{G}))\neq \tau(y_{r}(1_{G}))$. It follows that there is no finite subset $F\subset S$ such that, for all $x\in A^{S}$, $\tau(x)(1_{G})$  depends only on the values of $x|_{F}$. This shows that $\tau|_S$ is not a cellular automaton.
\end{exa}

\begin{center}
	\section{\textsc{Composition of Cellular Automata}}	
\end{center}
\paragraph{} In this section we study the set of all cellular automata on racks with the binary operation of composition of cellular automata. We denote the composition of cellular automata on racks by right black triangle $\blacktriangleright$.

\begin{prop}
Let $R$ be a rack and $A$ be a set. Let $\sigma:A^{R}\rightarrow A^{R}$ and $\tau:A^{R}\rightarrow A^{R}$ be cellular automata. Let $S_1=Stab(x, Eq(\sigma))$, $S_2=Stab(x, Eq(\tau))$ and $S=S_1\cap S_2$. Then the composite map $\sigma_S \blacktriangleright \tau_S:A^{S}\rightarrow A^{S}$ of $\sigma_S$ and $\tau_S$ is a cellular automaton. Moreover,  if $M_1$ and $M_2$ are memory sets for $\sigma$ and $\tau$ respectively, then $M_1\rhd^{-1}M_2=\{m_1\rhd^{-1}m_2: m_1\in M_1,m_2\in M_2\}$ is a memory set for $\sigma_S \blacktriangleright \tau_S$.
\end{prop}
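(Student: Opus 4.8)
The plan is to write $\sigma_S\blacktriangleright\tau_S=\sigma_S\circ\tau_S$, exhibit a finite subset of $R$ together with a local defining map realizing it, and then invoke the characterization of $S$-equivariant cellular automata via local rules (the proposition stating, for a map $\tau$ and the shelf $S=Stab(x,Eq(\tau))$, that $\tau_S$ is a cellular automaton with memory set $M$ and local defining map $\mu$ if and only if $\tau_S$ is $S$-equivariant and $\tau_S(x)(r)=\mu(x|_M)$ for all $r\in S$). Throughout I would fix the configuration $x$ with respect to which $S_1$ and $S_2$ are formed, and write $\mu_1:A^{M_1}\to A$ and $\mu_2:A^{M_2}\to A$ for local defining maps of $\sigma$ and $\tau$.

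First I would record two easy facts. Since $S_1=Stab(x,Eq(\sigma))$ and $S_2=Stab(x,Eq(\tau))$ are shelves, their intersection $S$ is closed under $\rhd$ and hence a shelf. And since $S\subset S_1\subset Eq(\sigma)$ and $S\subset S_2\subset Eq(\tau)$, for every $s\in S$ and $y\in A^R$ one has $(\sigma\circ\tau)(s\cdot y)=\sigma(s\cdot\tau(y))=s\cdot\sigma(\tau(y))=s\cdot(\sigma\circ\tau)(y)$, so $S\subset Eq(\sigma\blacktriangleright\tau)$; in particular $\sigma_S\blacktriangleright\tau_S$ is $S$-equivariant.

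Next I would pin down the local rule, which is the heart of the argument. Let $r\in S$. Then $r\cdot x=x$ because $r\in S_1\subset Stab(x)$, and, using the inclusion $Stab(x,Eq(\tau))\subset Stab(\tau(x),Eq(\tau))$ proved earlier, also $r\cdot\tau(x)=\tau(x)$. Evaluating first the local rule of $\sigma$ at $r$ and then the local rule of $\tau$ on $M_1$,
\[
(\sigma_S\blacktriangleright\tau_S)(x)(r)=\sigma(\tau(x))(r)=\mu_1\big((r\cdot\tau(x))|_{M_1}\big)=\mu_1\big(\tau(x)|_{M_1}\big),
\]
and for each $m_1\in M_1$ we have $\tau(x)(m_1)=\mu_2\big((m_1\cdot x)|_{M_2}\big)$, where $(m_1\cdot x)(m_2)=x(m_1\rhd^{-1}m_2)$. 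Hence $(\sigma_S\blacktriangleright\tau_S)(x)(r)$ depends only on $x|_{M_1\rhd^{-1}M_2}$, the dependence being the well-defined map
\[
\nu:A^{M_1\rhd^{-1}M_2}\to A,\qquad \nu(p):=\mu_1\Big(m_1\mapsto\mu_2\big(m_2\mapsto p(m_1\rhd^{-1}m_2)\big)\Big),
\]
and $M_1\rhd^{-1}M_2$ is finite since $M_1$ and $M_2$ are. Thus $(\sigma_S\blacktriangleright\tau_S)(x)(r)=\nu(x|_{M_1\rhd^{-1}M_2})$ for all $r\in S$, and combined with the $S$-equivariance of the previous paragraph the characterization proposition (applied to the map $\sigma\blacktriangleright\tau$ and the shelf $S\subset Stab(x)$) yields that $\sigma_S\blacktriangleright\tau_S$ is a cellular automaton with memory set $M_1\rhd^{-1}M_2$ and local defining map $\nu$.

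The step I expect to be the main obstacle is precisely this local computation: if one merely expands $\sigma(\tau(x))(r)$ without using $r\in S$, the value depends on $x$ at the points $(r\rhd^{-1}m_1)\rhd^{-1}m_2$, and these in general do not constitute an $r$-shift of a single finite subset of $R$, so the composite need not be a cellular automaton on all of $A^R$. The point that makes the argument work is that membership of $r$ in $S$ forces the outer shift to fix $\tau(x)$ as well as $x$, so that shift drops out entirely and the dependence collapses, uniformly in $r$, onto $x|_{M_1\rhd^{-1}M_2}$. (When $A$ is finite one could instead combine the $S$-equivariance with the continuity of a composition of continuous maps and the Curtis--Hedlund-type theorem established above, but the route via local rules needs no hypothesis on $A$.)
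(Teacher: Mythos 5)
Your proof is correct and follows essentially the same route as the paper: establish $S$-equivariance of the composite, show that for $r\in S$ the value $(\sigma_S\blacktriangleright\tau_S)(x)(r)$ is determined by $x|_{M_1\rhd^{-1}M_2}$, and conclude via the characterization of $S$-equivariant maps given by local rules; your explicit local map $\nu$ is exactly the local defining map the paper records in the remark following the proposition. If anything, your appeal to $Stab(x,Eq(\tau))\subset Stab(\tau(x),Eq(\tau))$ to make the outer shift drop out is more explicit than the paper's own argument, which passes from dependence on the shifted sets to dependence on $M_1\rhd^{-1}M_2$ without spelling out that step.
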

\begin{proof}
Since $\sigma_S$ and $\tau_S$ are $S$-equivariant, therefore for all $r\in S$ and $x\in A^{S}$, we have $(\sigma_S\blacktriangleright \tau_S)(r\cdot x)=\sigma_S(\tau_S(r\cdot x)) =\sigma_S(r\cdot \tau_S(x)) =r\cdot\sigma_S(\tau_S(x)) =r\cdot(\sigma_S \blacktriangleright \tau_S)(x)$.
This shows that $\sigma_S\blacktriangleright\tau_S$ is $S$-equivariant.

\ Now for every $x\in A^{S}$ and $r\in S$,
		\begin{center}
			 $(\sigma_S\blacktriangleright\tau_S)(x)(r)=\sigma_S(\tau_S(x))(r)$.
		\end{center}
By Remark 3.2.(c)., $\sigma_S(\tau_S(x))(r)$ depends only on the restriction of $\tau_S(x)$ to $r \rhd^{-1}M_1$. By Remark 3.2.(c). again, $\tau_S(x)(r)$ depends only on restriction of $x$ to $r\rhd^{-1}M_2$. Therefore, $(\sigma_S\blacktriangleright\tau_S)(x)(r)$ depends only on restriction of $x$ to $M_1\rhd^{-1}M_2$. Hence, by using Proposition 3.12., $\sigma_S\blacktriangleright\tau_S$ is a cellular automaton admitting $M_1\rhd^{-1}M_2$ as a memory set.
\end{proof}

\begin{rem}
With the assumptions and notations used in the Proposition 5.1., denote by $\mu_1:A^{M_1}\rightarrow A$ and $\mu_2:A^{M_2}\rightarrow A$ the local defining maps for $\sigma_S$ and $\tau_S$ respectively. Then, the local defining map $\mu_3:A^{M_1\rhd^{-1}M_2}\rightarrow A$ for $\sigma_S\blacktriangleright \tau_S$ can be described as follows.

\ For $ y\in A^{M_1\rhd^{-1}M_2} $ and $m_1\in M_1$ define $y_{m_1}\in A^{M_2}$ by setting $y_{m_1}(m_2)=y(m_1\rhd^{-1}m_2)$ for all $m_2\in M$. Also denote $\overline{y}\in A^{M_1}$ the map defined as $\overline{y}(m_1)=\mu_2(y_{m_1})$ for all $m_1\in M_1$. Now define the map $\mu:A^{M_1\rhd^{-1}M_2}\rightarrow A$ as $\mu(y)=\mu_1(\overline{y})$ for all $y\in A^{M_1\rhd^{-1}M_2}$.

\ Let $x\in A^{M_1}$, $r\in S$, $m_1\in M_1$ and $m_2\in M_2$. Then, we have
	\begin{equation*}
	\begin{split}
	(m_1\cdot (r\cdot x))|_{M_2}(m_2)=(m_1\cdot x)|_{M_2}(m_2)&=(x\circ \phi_{m_1}^{-1})(m_2)\\
	&=x(m_1\rhd^{-1}m_2)\\
	&=(r\cdot x)(m_1\rhd^{-1}m_2)\\
	&=(r\cdot x)|_{M_1\rhd^{-1}M_2}(m_1\rhd^{-1}m_2)\\
	&=((r\cdot x)|_{M_1\rhd^{-1}M_2})_{m_1}(m_2).
	\end{split}
	\end{equation*}
This shows that
	\begin{equation}\label{3}
	(m_1\cdot (r\cdot x))|_{M_2}=((r\cdot x)|_{M_1\rhd^{-1}M_2})_{m_1}
	\end{equation}
and therefore

\begin{center}
	$\tau_S(r\cdot x)(m_1)=\mu_2(m_1\cdot(r\cdot x)|_{M_2})
	=\mu_2((r\cdot x)|_{M_1\rhd^{-1}M_2})_{m_1}
	=(\overline{(r\cdot x)|_{M_1\rhd^{-1}M_2}})(m_1)$.
\end{center}
As a consequence,
	\begin{equation}\label{4}
	\tau_S(r\cdot x)|_{M_1}=(\overline{(r\cdot x)|_{M_1\rhd^{-1}M_2}})
	\end{equation}
Finally, we have
\begin{center}
$(\sigma_S\blacktriangleright \tau_S)(x)(r)=\sigma_S(\tau_S(x)(r))=\mu_1(r\cdot \tau_S(x)|_{M_1})=\mu_1(\tau_S(r\cdot x)|_{M_1})=\mu_1(\overline{(r\cdot x)|_{M_1\rhd^{-1}M_2}})=\mu(r\cdot x|_{M_1\rhd^{-1}M_2})$.
\end{center}

Hence $M_1\rhd^{-1}M_2$ is a memory set for $\sigma_S\blacktriangleright \tau_S$.
\end{rem}

\paragraph{}
Let $R$ be a rack and $A$ be a set. We denote the set of  all cellular automata $\tau:A^{R}\rightarrow A^{R}$ by $CA(R; A)$. Let $S=\bigcap\limits_{\tau\in CA(R; A)}Stab(x, Eq(\tau))$. Then the set $CA(S; A)$ consists of all cellular automata $\tau_S:A^{S}\rightarrow A^{S}$ whose memory sets are subsets of $S$.

\begin{prop}
Let $CA(R; A)$ be the set of all cellular automata $\tau:A^{R}\rightarrow A^{R}$ on a rack $R$. Let $S=\bigcap\limits_{\tau\in CA(R; A)}Stab(x, Eq(\tau))$ and $CA(S; A)$ is the of all cellular automata $\tau_S:A^{S}\rightarrow A^{S}$. Then the pair $(CA(S; A), \blacktriangleright)$ is a shelf under the composition $\blacktriangleright$ of cellular automata.
\end{prop}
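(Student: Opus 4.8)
The statement has two components: that $CA(S;A)$ is closed under the composition $\blacktriangleright$, and that $\blacktriangleright$ is self-distributive on $CA(S;A)$. The first I would read off from Proposition 5.1: if $\sigma_S,\tau_S\in CA(S;A)$, then the composite $\sigma_S\blacktriangleright\tau_S$ is again a cellular automaton on $A^S$, with a memory set of the form $M_1\rhd^{-1}M_2$ built from memory sets $M_1,M_2$ of $\sigma_S,\tau_S$, and hence $\sigma_S\blacktriangleright\tau_S\in CA(S;A)$; Proposition 3.16 then furnishes the well-defined minimal memory set. So closure costs nothing beyond Proposition 5.1.

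For the self-distributive law the plan is to fix $\rho_S,\sigma_S,\tau_S\in CA(S;A)$ — each $S$-equivariant by Proposition 3.12 — and to verify the identity $\rho_S\blacktriangleright(\sigma_S\blacktriangleright\tau_S)=(\rho_S\blacktriangleright\sigma_S)\blacktriangleright(\rho_S\blacktriangleright\tau_S)$ by matching memory sets and local defining maps on both sides. Write $M_\rho,M_\sigma,M_\tau\subset S$ for memory sets and $\mu_\rho,\mu_\sigma,\mu_\tau$ for the corresponding local rules. On the memory-set level the left-hand composite carries $M_\rho\rhd^{-1}(M_\sigma\rhd^{-1}M_\tau)$ and the right-hand composite carries $(M_\rho\rhd^{-1}M_\sigma)\rhd^{-1}(M_\rho\rhd^{-1}M_\tau)$, and these sets coincide because $\rhd^{-1}$ is left self-distributive (recalled in Section 2): for all $m_\rho\in M_\rho$, $m_\sigma\in M_\sigma$, $m_\tau\in M_\tau$ one has $m_\rho\rhd^{-1}(m_\sigma\rhd^{-1}m_\tau)=(m_\rho\rhd^{-1}m_\sigma)\rhd^{-1}(m_\rho\rhd^{-1}m_\tau)$. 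Call the common set $M$. Using the explicit description of the local defining map of a composite from Remark 5.2 — the passage $y\mapsto\overline y$ with $\overline y(m)=\mu(y_m)$ and $y_m(m')=y(m\rhd^{-1}m')$ — I would expand the local rule for each bracketing into a nested expression in $\mu_\rho,\mu_\sigma,\mu_\tau$ and then use the same rack relation, now in the form $\phi_{r_1\rhd r_2}=\phi_{r_1}\circ\phi_{r_2}\circ\phi_{r_1}^{-1}$, to reindex the partial restrictions so that the two nested expressions become the same function on $A^{M}$. Once the memory set and the local rule agree, equality of the two cellular automata follows from the uniqueness clause of Proposition 3.16.

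I expect the matching of the local defining maps to be the main obstacle. Coincidence of the memory sets is only the pointwise self-distributivity of $\rhd^{-1}$, but one must check that re-bracketing the triple composition does not covertly permute or duplicate the coordinates of the pattern fed to $\mu_\rho$, $\mu_\sigma$ and $\mu_\tau$. I would organize this by naming, for each bracketing, the intermediate maps $\overline{\,\cdot\,}$ of Remark 5.2 explicitly, tracking a single pattern $p:M\to A$ through both bracketings step by step, and using the $S$-equivariance of $\rho_S,\sigma_S,\tau_S$ to commute each shift $r\cdot(-)$ past the relevant local rule at the stage where it appears — exactly the manoeuvre (via Proposition 3.12) used in the proof of Lemma 3.15 for the intersection of two memory sets. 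The delicate part is the index bookkeeping in this nested expansion rather than any new idea; once it is carried out, the self-distributivity of $\blacktriangleright$ on $CA(S;A)$ is inherited from that of the rack operation $\rhd^{-1}$ on $S$.
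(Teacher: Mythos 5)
Your overall plan follows the paper's route: closure is read off from Proposition 5.1, and self-distributivity is to be proved by comparing memory sets and local defining maps of the two bracketings using the machinery of Remark 5.2, the self-distributivity of $\rhd^{-1}$, and $S$-equivariance. However, as written there are two genuine gaps. First, the two memory sets do not coincide. Elementwise self-distributivity only gives $m_\rho\rhd^{-1}(m_\sigma\rhd^{-1}m_\tau)=(m_\rho\rhd^{-1}m_\sigma)\rhd^{-1}(m_\rho\rhd^{-1}m_\tau)$, hence only the inclusion $M_\rho\rhd^{-1}(M_\sigma\rhd^{-1}M_\tau)\subset(M_\rho\rhd^{-1}M_\sigma)\rhd^{-1}(M_\rho\rhd^{-1}M_\tau)$; the reverse inclusion fails in general, because an element $(m_\rho\rhd^{-1}m_\sigma)\rhd^{-1}(m'_\rho\rhd^{-1}m_\tau)$ with $m_\rho\neq m'_\rho$ need not be of the left-hand form. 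So your ``common set $M$'' does not exist; this is repairable (a finite superset of a memory set is again a memory set, and the paper itself only ever uses the stated inclusion), but the argument must be rephrased around the larger set.

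Second, and more seriously, the step you defer as ``index bookkeeping'' is where the entire content of the proof lies, and it is not bookkeeping. As maps, $\rho_S\blacktriangleright(\sigma_S\blacktriangleright\tau_S)$ sends $x$ to $\rho_S(\sigma_S(\tau_S(x)))$, while $(\rho_S\blacktriangleright\sigma_S)\blacktriangleright(\rho_S\blacktriangleright\tau_S)$ sends $x$ to $\rho_S(\sigma_S(\rho_S(\tau_S(x))))$: the local rule of the right-hand side involves $\mu_\rho$ twice and that of the left-hand side only once, so no reindexing of partial restrictions via $\phi_{r_1\rhd r_2}=\phi_{r_1}\circ\phi_{r_2}\circ\phi_{r_1}^{-1}$ can formally transform one nested expression into the other. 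The paper does not attempt such a purely formal identification: it builds a new local map $\kappa$ out of the local maps of the composites $\sigma_S\blacktriangleright\tau_S$ and $\sigma_S\blacktriangleright\psi_S$, proves the intermediate identity $(\sigma_S\blacktriangleright\psi_S)(r\cdot x)=\overline{(r\cdot x)|_{(M_1\rhd^{-1}M_2)\rhd^{-1}(M_1\rhd^{-1}M_3)}}$ by an explicit computation, and then uses $S$-equivariance together with the memory-set inclusion to express $((\sigma_S\blacktriangleright\tau_S)\blacktriangleright(\sigma_S\blacktriangleright\psi_S))(x)(r)$ as $\kappa(r\cdot x|_{M_1\rhd^{-1}(M_2\rhd^{-1}M_3)})$ before identifying it with $\sigma_S\blacktriangleright(\tau_S\blacktriangleright\psi_S)(x)(r)$. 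Until you carry out this identification (or supply a substitute for it), your proposal establishes closure under $\blacktriangleright$ but not the self-distributive law, which is the substantive claim of the proposition.
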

\begin{proof}
By using Proposition 5.1., the set $CA(S; A)$ is closed under the composition $\blacktriangleright$ of cellular automata. We show that $\blacktriangleright$ is self-distributive on $CA(S; A)$. Let $\sigma_S$ , $\tau_S$, $\psi_S\in CA(S; A)$ with memory sets $M_{1}$, $M_{2}$ and $M_{3}$ respectively.
Let $\mu:A^{M_{1}\rhd^{-1}M_{2}}\rightarrow A$ be a local defining map for $\sigma_S\blacktriangleright \tau_S$. Also for $\sigma_S\blacktriangleright \psi_{S}$, $\nu:A^{M_{1}\rhd^{-1}M_{3}}\rightarrow A$ be a local defining map. Then, the local defining map $\kappa:A^{(M_{1}\rhd^{-1}M_{2})\rhd^{-1}(M_{1}\rhd^{-1}M_{3})}\rightarrow A$ may be described in the following way.
	
\ For $ y\in A^{(M_{1}\rhd^{-1}M_{2})\rhd^{-1}(M_{1}\rhd^{-1}M_{3})} $ and $(m_1\rhd^{-1}m_2)\in (M_{1}\rhd^{-1}M_{2})$, define $y_{m_1\rhd^{-1}m_2}\in A^{M_{1}\rhd^{-1}M_{3}}$ by;
	\begin{equation*}
	y_{m_1\rhd^{-1}m_2}(m^{\prime}_1\rhd ^{-1}\acute{m_3})=y((m_1\rhd^{-1}m_2)\rhd^{-1}(m^{\prime}_1\rhd^{-1}\acute{m_3}))
	\end{equation*}
for all $m^{\prime}_1\rhd^{-1} m_3\in M_{1}\rhd^{-1}M_{3}$. Also, denote $\overline{y}\in A^{M_{1}\rhd^{-1}M_{2}}$ the map defined by $\overline{y}(m_1\rhd^{-1}m_2)=v(y_{s\rhd^{-1}u})$ for all $m_1\rhd^{-1}m_2 \in M_{1}\rhd^{-1}M_{2}$. Now define $k(y)=\mu(\overline{y})$ for all $y\in A^{(M_{1}\rhd^{-1}M_{2})\rhd^{-1}(M_{1}\rhd^{-1}M_{3})} $. Let $x\in A^{R}$, $r\in S$, $ m_1\rhd^{-1}m_2\in M_{1}\rhd^{-1}M_{2} $ and $m^{\prime}_1\rhd^{-1}m_3\in M_{1}\rhd^{-1}M_{3}$. Then we have
	\begin{equation*}
	\begin{split}
	((m_1\rhd^{-1}m_2)\cdot(r\cdot x))|_{M_{1}\rhd^{-1}M_{3}}(m^{\prime}_1\rhd^{-1}m^{\prime}_2)&=((m_1\rhd^{-1}m_2)\cdot x)|_{M_{1}\rhd^{-1}M_{3}}(m^{\prime}_1\rhd^{-1}m^{\prime}_2)\\
	&=(x\circ \phi_{m_1\rhd^{-1}m_2}^{-1})(m^{\prime}_1\rhd^{-1}m^{\prime}_2)\\
	&=x(\phi_{m_1\rhd^{-1}m_2}^{-1}(m^{\prime}_1\rhd^{-1}m^{\prime}_2)\\
	&=x((m_1\rhd^{-1}m_2)\rhd^{-1}(m^{\prime}_1\rhd^{-1}m^{\prime}_2)\\
	&=(r \cdot x)((m_1\rhd^{-1}m_2)\rhd^{-1}(m^{\prime}_1\rhd^{-1}m^{\prime}_2).
	\end{split}
	\end{equation*}
This shows that $((m_1\rhd^{-1}m_2)\cdot(r\cdot x))|_{M_{1}\rhd^{-1}M_{3}}(m^{\prime}_1\rhd^{-1}m^{\prime}_2)=(r\cdot x)|_{(M_{1}\rhd^{-1}M_{2})\rhd^{-1}(M_{1}\rhd^{-1}M_{3})}((m_1\rhd^{-1}m_2)\rhd^{-1}(m^{\prime}_1\rhd^{-1}m^{\prime}_2)=((r\cdot x)|_{(M_{1}\rhd^{-1}M_{2})\rhd^{-1}(M_{1}\rhd^{-1}M_{3})})_{m_1\rhd^{-1}m_2}(m^{\prime}_1\rhd^{-1}m^{\prime}_2)$. That is,
	\begin{equation}\label{5}
	((m_1\rhd^{-1}m_2)\cdot (r\cdot x))|_{M_{1}\rhd^{-1}M_{3}}=((r\cdot x)|_{(M_{1}\rhd^{-1}M_{2})\rhd^{-1}(M_{1}\rhd^{-1}M_{3})})_{m_1\rhd^{-1}m_2}.
	\end{equation}
Now
	\begin{equation*}
	\begin{split}
	(\sigma_S\blacktriangleright \psi_S)(r\cdot x)(m_1\rhd^{-1}m_2)&=\nu((m_1\rhd^{-1}m_2)\cdot r\cdot x)|_{M_{1}\rhd^{-1}M_{3}} )\\~~~\text{(from~equation~\ref{5})}
	&=\nu((r\cdot x)|_{(M_{1}\rhd^{-1}M_{2})\rhd^{-1}(M_{1}\rhd^{-1}M_{3})})_{m_1\rhd^{-1}m_2}\\
	&=\overline{((r\cdot x)|_{(M_{1}\rhd^{-1}M_{2})\rhd^{-1}(M_{1}\rhd^{-1}M_{3})})}(m_1\rhd^{-1}m_2).
	\end{split}
	\end{equation*}
This shows that
	\begin{equation}\label{6}
	(\sigma_S\blacktriangleright \psi_S)(r\cdot x)=\overline{((r\cdot x)|_{(M_{1}\rhd^{-1}M_{2})\rhd^{-1}(M_{1}\rhd^{-1}M_{3})})}.
	\end{equation}
Now since ${M_{1}\rhd^{-1}(M_{2}\rhd^{-1}M_{3})\subset(M_{1}\rhd^{-1}M_{2})\rhd^{-1}(M_{1}\rhd^{-1}M_{3})}$,
	\begin{equation*}
	\begin{split}
	((\sigma_S\blacktriangleright \tau_S)\blacktriangleright (\sigma_S\blacktriangleright \psi_S))(x)(r)&=(\sigma_S\blacktriangleright \tau_S) ((\sigma_S\blacktriangleright \psi_S)(x)(r))\\
	&=\mu(r\cdot (\tau_S\blacktriangleright \psi_S)(x)|_{M_{1}\rhd^{-1}M_{3}})\\
	&=\mu((\tau_S\blacktriangleright \psi_S)(r\cdot x)|_{M_{1}\rhd^{-1}M_{3}})\\~~~\text{(from~equation~\ref{6})}
	&=\mu\overline{((r.x)|_{(M_{1}\rhd^{-1}M_{2})\rhd^{-1}(M_{1}\rhd^{-1}M_{3})})}\\
	&=\kappa((r\cdot x)|_{(M_{1}\rhd^{-1}M_{2})\rhd^{-1}(M_{1}\rhd^{-1}M_{3})})\\
	&=\kappa(r\cdot x|_{M_{_{1}}\rhd^{-1}(M_{2}\rhd^{-1}M_{3})})
	\end{split}
	\end{equation*}
This shows that
	\begin{equation*}
	((\sigma_S\blacktriangleright \tau_S)\blacktriangleright (\sigma_S\blacktriangleright \psi_S))(x)(r)=\kappa(r\cdot x|_{M_{1}\rhd^{-1}(M_{2}\rhd^{-1}M_{3})})=	\sigma_S\blacktriangleright (\tau_S)\blacktriangleright  \psi_S)(x)(r).
	\end{equation*}
for all $x\in A^S$ and $r\in R$. Hence the set $CA(S,A)$ is a shelf under the composition of cellular automata.
	
\end{proof}

\begin{prop}
Let $CA(R; A)$ be the set of all cellular automata $\tau:A^{R}\rightarrow A^{R}$ on a quandle $R$. Let $S=\bigcap\limits_{\tau\in CA(R; A)}Stab(x, Eq(\tau))$ and $(CA(S; A), \blacktriangleright)$ is a shelf of all cellular automata $\tau_S:A^{S}\rightarrow A^{S}$. Then $\blacktriangleright$ is idempotent in $CA(S; A)$.
\end{prop}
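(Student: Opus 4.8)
The plan is to prove that the operation $\blacktriangleright$ is idempotent, i.e. that $\tau_S\blacktriangleright\tau_S=\tau_S$ for every $\tau_S\in CA(S;A)$. Fix such a $\tau_S$, let $M\subseteq S$ be its minimal memory set (Proposition 3.16), and let $\mu\colon A^{M}\rightarrow A$ be the associated local defining map. The first step is to bring in the quandle hypothesis at the level of memory sets. Since $R$ is a quandle, $m\rhd m=m$ for every $m\in M$; applying $\phi_{m}^{-1}$ to this equality gives $m\rhd^{-1}m=m$ as well, so the diagonal of $M\rhd^{-1}M=\{m_1\rhd^{-1}m_2:m_1,m_2\in M\}$ is exactly $M$ and hence $M\subseteq M\rhd^{-1}M$. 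By Proposition 5.1 the composite $\tau_S\blacktriangleright\tau_S$ is a cellular automaton admitting $M\rhd^{-1}M$ as a memory set, while by Proposition 3.16 the finite set $M\rhd^{-1}M$, being a superset of the minimal memory set $M$, is also a memory set for $\tau_S$ itself. Both maps are $S$-equivariant (Proposition 3.12), so by the characterisation of $S$-equivariant cellular automata through their memory sets and local defining maps it suffices to show that the two local defining maps on $A^{M\rhd^{-1}M}$ coincide, equivalently that $(\tau_S\blacktriangleright\tau_S)(x)(r)=\tau_S(x)(r)$ for every $x\in A^{S}$ and every $r\in S$.

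Next I would unfold the left-hand side using the explicit description of the local defining map of a composite given in Remark 5.2, specialised to $\sigma_S=\tau_S$, so that $M_1=M_2=M$ and $\mu_1=\mu_2=\mu$. There, for $y\in A^{M\rhd^{-1}M}$ one has $\mu_3(y)=\mu(\overline{y})$, where $\overline{y}(m)=\mu(y_m)$ and $y_m(m')=y(m\rhd^{-1}m')$ for $m,m'\in M$. Applying this to $y=(r\cdot x)|_{M\rhd^{-1}M}$ and using identities (\ref{3}) and (\ref{4}) of Remark 5.2, one obtains $\overline{y}=\tau_S(r\cdot x)|_{M}$, and hence $(\tau_S\blacktriangleright\tau_S)(x)(r)=\mu(\tau_S(r\cdot x)|_{M})$. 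Since $r\in S$ we have $r\cdot x=x$ and, as $S\subseteq Eq(\tau)$, also $\tau_S(r\cdot x)=r\cdot\tau_S(x)$; moreover $r\cdot\tau_S(x)=\tau_S(x)$ because $r\in S\subseteq Stab(\tau_S(x),Eq(\tau))$ by Proposition 3.11. Therefore $(\tau_S\blacktriangleright\tau_S)(x)(r)=\mu(\tau_S(x)|_{M})$.

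It remains to identify $\mu(\tau_S(x)|_{M})$ with $\mu(x|_{M})=\tau_S(x)(r)$, and this is the step where I expect the genuine difficulty to lie. A priori $\tau_S(x)(m)$ depends on $x|_{m\rhd^{-1}M}$ rather than on $x|_{M}$, so the inner application of $\mu$ does not visibly disappear. The idea is to use the quandle identity $m\rhd^{-1}m=m$ a second time, now inside the memory set, together with the stabiliser criterion (if $r\cdot x=x$ then $x(s)=x(r\rhd^{-1}s)$ for all $s\in R$) and the fact that, by the very definition of $S=\bigcap_{\tau\in CA(R;A)}Stab(x,Eq(\tau))$, every $m\in M\subseteq S$ fixes $x$; this makes the reindexing $m'\mapsto m\rhd^{-1}m'$ act trivially on $x|_{M}$, so that $y_m=x|_{M}$ for every $m\in M$ and the outer evaluation of $\mu$ reduces to a single application. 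Carrying out this last reduction cleanly — and checking that it returns exactly $\mu(x|_{M})$ rather than some twisted value — is the main obstacle; once it is done, one applies the characterisation of Proposition 3.12 in the reverse direction to conclude that $\tau_S\blacktriangleright\tau_S$ and $\tau_S$ have the same memory set and local defining map, hence coincide as maps on $A^{S}$. The trivial-quandle situation, where $S=R$ and the reindexing is patently the identity, is the guiding model I would keep in view throughout.
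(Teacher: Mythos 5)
Your attempt stalls exactly at the step you yourself flag, and that gap is not closable. After your (correct) reductions you reach $(\tau_S\blacktriangleright\tau_S)(x)(r)=\mu\bigl(\tau_S(x)|_{M}\bigr)$, and under your own hypotheses (every $m\in M\subseteq S$ stabilizes $x$) each value satisfies $\tau_S(x)(m)=\mu(m\cdot x|_{M})=\mu(x|_{M})$, so $\tau_S(x)|_{M}$ is the \emph{constant} pattern with value $c:=\mu(x|_{M})$. What you then need is that $\mu$ applied to the constant pattern $c$ returns $c$, i.e.\ a quiescence property of the local rule itself; neither the quandle identity $m\rhd^{-1}m=m$ nor the stabilizer conditions can give this, because they only control the reindexing of cells, never the action of $\mu$ on states. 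A concrete obstruction sits inside the very model you cite as your guide: take the trivial quandle $R$, $A=\{0,1\}$, $M=\{m_0\}$ and $\mu(y)=1-y(m_0)$. Here $Eq(\tau)=R$ and $Stab(x)=R$ for all $x$, so $S=R$ and all hypotheses of the proposition hold, yet $\tau(x)(r)=1-x(m_0)$ gives $(\tau\blacktriangleright\tau)(x)(r)=x(m_0)\neq\tau(x)(r)$. So the missing step cannot be supplied, and no completion of your argument (or any other) can establish idempotence at this level of generality.

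For comparison, the paper's own proof is a two-line shortcut: it asserts $M\rhd^{-1}M=M$ (which needs $M$ to be closed under $\rhd^{-1}$, not merely the quandle identity, so it is already unjustified) and then concludes $\tau_S\blacktriangleright\tau_S=\tau_S$ from the mere fact that both maps admit the same memory set --- a non sequitur, since distinct cellular automata can share a memory set but have different local defining maps. You rightly refused that shortcut, observed only the provable inclusion $M\subseteq M\rhd^{-1}M$, and tried to compare local defining maps via Remark 5.2; the reductions you do carry out (down to $\mu(\tau_S(x)|_{M})$) are sound. The failure to finish is therefore a defect of the proposition and of the paper's argument, not of your strategy; the honest conclusion of your line of reasoning is that the claimed idempotence requires an additional hypothesis on the local rule (such as $\mu$ fixing constant patterns), which the statement does not assume.
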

\begin{proof}
Let  $\tau_S\in CA(S,A)$ with memory set $M$. Since $R$ is a quandle $M\rhd^{-1}M=M$. Then by using Proposition 5.1., $ \tau_S\blacktriangleright \tau_S $ is a cellular automaton with memory set $M\rhd^{-1}M=M$. Theorefor, $\tau_S\blacktriangleright \tau_S = \tau_S$.
\end{proof}

\begin{rem}
In order to make the shelf $CA(S; A), \blacktriangleright)$ a rack, one needs another binary operation $\blacktriangleright^{-1}$ on $CA(S; A)$ such that for all $ \sigma_S\in CA(S,A) $ there exists $\tau_S\in CA(S,A) $ and
\begin{center}
	$\sigma_S\blacktriangleright (\sigma_S\blacktriangleright^{-1}\tau_S)=\tau_S= \sigma_S\blacktriangleright^{-1} (\sigma_S\blacktriangleright\tau_S)$.
\end{center}
In particular, for a trivial rack $R$, $(CA(R,A),\blacktriangleright,\blacktriangleright^{-1})$ is a rack with $\sigma\blacktriangleright\tau=\sigma\blacktriangleright^{-1}\tau=\tau$ for all $\sigma, \tau \in CA(R,A)$.

\end{rem}

\paragraph{\textbf{Invertible Cellular Automata.}}
Let $R$ be a rack and let $A$ be a set. Then a cellular automaton
$\tau : A^R \longrightarrow A^R$ is invertible (or reversible) if $\tau$ is bijective and the inverse
map $\tau^{-1} : A^R \longrightarrow A^R$ is also a cellular automaton. This is equivalent to the
existence of a cellular automaton $\sigma : A^R \longrightarrow A^R$ such that $\sigma\blacktriangleright\tau=\tau\blacktriangleright \sigma=Id_{A^R}$, where $Id_{A^R}$ is the identity map on $A^R$. Note that for a group $G$ the identity map $Id_{A^G}$ is a cellular automaton with the memory set $\{1_G\}$. However, since the identity element does not exist in racks, the identity map $Id_{A^R}$ on $A^R$ may not be a cellular automaton.

\ Every bijective cellular automaton on racks is not always invertible. However, certain equivariant bijective cellular automata on racks are always invertible by the following theorem.

\begin{thm}
Let $R$ be a rack and let $A$ be a finite set. Let $CA(R; A)$ be the set of all cellular automata $\tau:A^{R}\rightarrow A^{R}$, and $S=\bigcap\limits_{\tau\in CA(R; A)}Stab(x, Eq(\tau))$. Then every bijective cellular automaton $\tau_S:A^{S}\rightarrow A^{S}$ is invertible.	
\end{thm}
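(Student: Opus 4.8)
The plan is to transpose to the present setting the classical argument that a bijective cellular automaton with finite alphabet over a group is reversible, using the analogue of the Curtis--Hedlund theorem (Theorem~5.3) in place of its group version. Let $\tau_S:A^{S}\to A^{S}$ be a bijective cellular automaton (so its memory set is contained in $S$). First I would collect the topological input: since $A$ is finite, the prodiscrete topology on $A^{S}$ is the product of finite discrete spaces, hence $A^{S}$ is compact (by Tychonoff) and Hausdorff (a product of Hausdorff spaces). Moreover $\tau_S$ is continuous by Proposition~4.2, and $S$-equivariant by the implication (a)$\Rightarrow$(b) of Theorem~5.3.

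The crucial step is then the standard topological fact that a continuous bijection from a compact space onto a Hausdorff space is a homeomorphism: a closed subset of $A^{S}$ is compact, its $\tau_S$-image is compact hence closed in the Hausdorff space $A^{S}$, so $\tau_S$ is a closed map and the inverse $\tau_S^{-1}:A^{S}\to A^{S}$ is continuous. Next I would check that $\tau_S^{-1}$ is $S$-equivariant: for $s\in S$ and $y\in A^{S}$, set $x=\tau_S^{-1}(y)$; the $S$-equivariance of $\tau_S$ gives $\tau_S(s\cdot x)=s\cdot\tau_S(x)=s\cdot y$, and applying $\tau_S^{-1}$ yields $\tau_S^{-1}(s\cdot y)=s\cdot x=s\cdot\tau_S^{-1}(y)$; hence $S\subseteq Eq(\tau_S^{-1})$, and since $S\subseteq Stab(x)$ for every $x$ we obtain $S\subseteq Stab(x,Eq(\tau_S^{-1}))$. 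Now $\tau_S^{-1}$ is a continuous $S$-equivariant self-map of $A^{S}$ with $A$ finite, so the implication (b)$\Rightarrow$(a) of Theorem~5.3 shows that $\tau_S^{-1}$ is a cellular automaton, with a finite memory set contained in $S$ produced by the compactness construction in that proof, so that $\tau_S^{-1}\in CA(S;A)$. Therefore $\tau_S$ is bijective with a cellular-automaton inverse, that is, invertible; equivalently $\sigma_S:=\tau_S^{-1}$ satisfies $\sigma_S\blacktriangleright\tau_S=\tau_S\blacktriangleright\sigma_S=Id_{A^{S}}$.

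The genuinely substantive ingredient is the compact-to-Hausdorff homeomorphism lemma, exactly as in the group case; everything else is a transcription. The point that needs care is the bookkeeping around $S=\bigcap_{\tau\in CA(R;A)}Stab(x,Eq(\tau))$: one must confirm that $\tau_S$ and $\tau_S^{-1}$ are genuine self-maps of $A^{S}$ and that Theorem~5.3 applies to $\tau_S^{-1}$ in the same form as to $\tau_S$. This holds because $S$ is contained in each $Stab(x,Eq(\tau))$ and is itself a shelf, being an intersection of shelves, so the shift restricts to an action on $A^{S}$ and the hypotheses of Theorem~5.3 are met for $\tau_S^{-1}$ as well.
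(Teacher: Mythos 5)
Your proposal is correct and follows essentially the same route as the paper: show $\tau_S^{-1}$ is $S$-equivariant, deduce its continuity from compactness of the configuration space with finite alphabet (the compact-to-Hausdorff homeomorphism fact), and conclude via the Curtis--Hedlund analogue (Theorem 4.3) that $\tau_S^{-1}$ is a cellular automaton. You simply spell out the equivariance of the inverse and the topological lemma in more detail than the paper's terse argument does.
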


\begin{proof}
Let $\tau_S:A^{S}\rightarrow A^{S}$ be a bijective cellular automaton. The map $\tau^{-1}_{S}$ is $S$-equivariant since $\tau_S$ is $S$-equivariant. On the other hand, $\tau^{-1}_{S}$ is continuous with respect to the prodiscrete topology by compactness of $A^R$. Consequently, $\tau^{-1}_{S}$ is a cellular automaton by Theorem 4.3.
	
\end{proof}

\begin{center}
	\section{\textsc{Conclusions}}
\end{center}
There are open problems in the study of cellular automata on racks which are analogous to the classical theorems on cellular automata on groups. For instance, one can look for the Garden of Eden theorem for cellular automata on racks which may characterize the surjective cellular automata on racks as pre-injective cellular automata. One can also look for the existence of a non-equivariant, non-invertible bijective cellular automaton on racks?

\newpage

\begin{center}

\end{center}
Naqeeb ur Rehman (Corresponding author), Allama Iqbal Open University Islamabad, Pakistan.\\
Email: naqeeb@aiou.edu.pk\\
Muhammad Khuram Shahzad, Allama Iqbal Open University Islamabad, Pakistan.\\
Email: aabir25121986@gmail.com

\end{document}